\def\thm@space@setup{%
  \thm@preskip=0.1in
  \thm@postskip=0in
}
\numberwithin{equation}{section}
\theoremstyle{plain}
\newtheorem{thm}{Theorem}[section]
\newtheorem{lemma}[thm]{Lemma}
\theoremstyle{definition}
\newtheorem{defn}{Definition}[section]
\newtheorem{conj}{Conjecture}[section]
\newtheorem{rem}{Remark}[section]
\DeclareMathOperator{\wt}{wt}
\DeclareMathOperator{\weight}{weight}
\DeclareMathOperator{\Prob}{Prob}
\DeclareMathOperator{\type}{type}
\DeclareMathOperator{\D}{D}
\DeclareMathOperator{\E}{E}
\DeclareMathOperator{\A}{A}
\begin{document}

\begin{center} {\Large{\sc Multi-Catalan Tableaux and the Two-Species TASEP}} \\
\vspace{0.1in}
Olya Mandelshtam
\footnote{Phone: +1 (949) 689-5748\\
Fax: +1 (510) 642-8204\\
E-mail: olya@math.berkeley.edu}
 \end{center}
 
\begin{abstract}
The goal of this paper is to provide a combinatorial expression for the steady state probabilities of the two-species PASEP. In this model, there are two species of particles, one ``heavy'' and one ``light'', on a one-dimensional finite lattice with open boundaries. Both particles can swap places with adjacent holes to the right and left at rates 1 and $q$. Moreover, when the heavy and light particles are adjacent to each other, they can swap places as if the light particle were a hole. Additionally, the heavy particle can hop in and out at the boundary of the lattice. Our main result is a combinatorial interpretation for the stationary distribution at $q=0$ in terms of certain multi-Catalan tableaux. We provide an explicit determinantal formula for the steady state probabilities, as well as some general enumerative results for this case. We also describe a Markov process on these tableaux that projects to the two-species PASEP, and thus directly explains the connection between the two.  Finally, we give a conjecture that extends our formula for the stationary distribution to the $q=1$ case, using certain two-species alternative tableau.
\end{abstract}

 \section{Introduction}
The Partially Asymmetric Simple Exclusion Process (PASEP) is a well-studied model that describes the dynamics of particles hopping on a finite one-dimensional lattice on $n$ sites with open boundaries, with the rule that there is at most one particle in a site, and at most one particle hops at a time. Figure \ref{PASEP_parameters} shows the parameters of this process, with the Greek letters denoting the rates of the hopping particles. Processes of this flavor have been studied in many contexts, in particular for their connections to some very nice combinatorics.  For instance, see \cite{cw2007} and the references therein.

\begin{wrapfigure}[7]{r}{0.35\textwidth}
\centering
\includegraphics[width=0.35\textwidth]{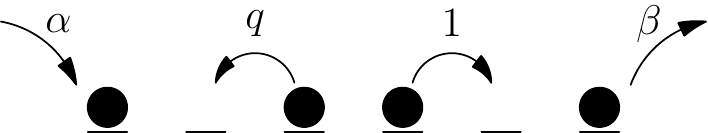}
\caption{The parameters of the PASEP.}
\noindent
\label{PASEP_parameters}
\end{wrapfigure}

In this work we consider a two-species PASEP with  ``heavy'' and ``light'' particles, which we call types 2 and 1 respectively. Both types of particles can swap places with an adjacent hole to the right and left with rates 1 and $q$, respectively. Furthermore, heavy particles can enter from the left and exit on the right of the lattice with respective rates $\alpha$ and $\beta$, and they can treat the light particles as holes and swap places with them to the right and left, also at rates 1 and $q$.  Since the light particles cannot enter or exit, their number stays fixed. In particular, when we fix the number of type 1 particles to be zero, we recover the original PASEP. The two-species process has been studied for some of its interesting thermodynamic properties, and a Matrix Ansatz solution and corresponding matrices in the work of Uchiyama in \cite{uchiyama} gave exact expressions for the steady state distribution of the system.

More formally, the two-species PASEP is a Markov chain, whose states are words of length $n$ in the letters $\{\D, \E, \A\}$, where D and A represent the particles of types 2 and 1, and E represents a hole.

Figure \ref{2_parameters} shows the parameters of the two-species process. More precisely, the transitions in the Markov chain are the following, with $X$ and $Y$ arbitrary words in $\{\D, \E, \A\}^{\ast}$.
\begin{displaymath} XAEY \overset{1}{\underset{q}{\rightleftharpoons}} XEAY \qquad XDEY \overset{1}{\underset{q}{\rightleftharpoons}} XEDY \qquad XDAY \overset{1}{\underset{q}{\rightleftharpoons}} XADY\end{displaymath}

\begin{displaymath}EX \overset{\alpha}{\rightharpoonup} DX \qquad \qquad XD \overset{\beta}{\rightharpoonup} XE\end{displaymath}
where by $X \overset{u}{\rightharpoonup} Y$ we mean that the transition from $X$ to $Y$ has probability $\frac{u}{n+1}$, $n$ being the length of $X$ (and also $Y$).

\begin{figure}[h]
\centering
\includegraphics[width=0.8\textwidth]{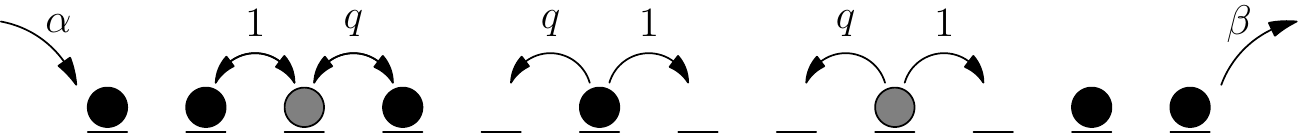}
\caption{The parameters of the two-species PASEP. The black particles are of type 2, and the grey ones are of type 1.}
\noindent
\label{2_parameters}
\end{figure}

Due to (2.9) of \cite{uchiyama}, a Matrix Ansatz solution holds for the two-species PASEP. For the theorem below, we use the following notation for a state: locations are labeled 1 through $n$, and $\tau_i \in \{0,1)$ and $\sigma_i \in \{0,1)$ are the particle indicators for particles 2 and 1 respectively, with the pair $(\tau,\sigma)$ representing a state. For example, the state $\A\E\D\D\A$ is represented by $\tau=(0,0,1,1,0)$ and $\sigma=(1,0,0,0,1)$. We use the notation $\Prob((\tau,\sigma))$ or equivalently $\Prob(X)$ for $X$ a word in $\{\D, \E, \A\}^{\ast}$ to describe the steady state probability of state $X$.

\begin{thm}[Uchiyama, 2008]\label{ansatz}
Let $(\tau,\sigma)$ represent a state of the two-species PASEP of length $n$ with $r$ particles of type 1. Suppose there are matrices D,E, and A and vectors $\langle w|$ and $|v \rangle$ which satisfy the following conditions
\begin{displaymath}DE = D+E+qED \qquad DA = A + qAD \qquad AE = A + qEA\end{displaymath}
\begin{displaymath}\langle w| E = \frac{1}{\alpha} \langle w| \qquad D|v \rangle= \frac{1}{\beta} |v \rangle\end{displaymath}
then 
\begin{displaymath}\Prob((\tau,\sigma)) = \frac{1}{Z_{n,r}} \langle w| \prod_{i=1}^n \tau_i D + \sigma_i A + (1-\tau_i)(1-\sigma_i)E|v \rangle\end{displaymath}
where $Z_{n,r}$ is the coefficient of  $y^r$ in $\frac{\langle w| (D + yA + E)^n|v \rangle}{\langle w | A^r|v \rangle}$.
\end{thm}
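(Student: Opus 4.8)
The plan is to show that the unnormalized weights
$$f(\tau,\sigma) = \langle w| \prod_{i=1}^n \big(\tau_i \D + \sigma_i \A + (1-\tau_i)(1-\sigma_i)\E\big)|v\rangle$$
define a stationary measure for the Markov chain, after which dividing by the sector normalization yields the stated probabilities. Since the number $r$ of type-$1$ particles is conserved, the chain splits into sectors indexed by $(n,r)$, and it suffices to work in a single sector. There a stationary measure is characterized by the balance equation: for every state $S$, $\sum_{S'}\big[p(S'\!\to\!S)f(S') - p(S\!\to\!S')f(S)\big] = 0$, where the $p$'s are the transition probabilities. Because normalization is one global constant per sector, it is enough to verify this for the unnormalized $f$. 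First I would localize: every transition is nearest-neighbor in the bulk or a single-site update at one of the two ends, so the net flux at $S$ decomposes into a sum over internal bonds $(i,i+1)$ plus the two boundary sites, and for a matrix-product weight the contribution of bond $(i,i+1)$ factors as $\langle w|(\cdots)_{<i}\,(\text{two-site piece})_{i,i+1}\,(\cdots)_{>i+1}|v\rangle$. The whole computation thus reduces to the two-letter sub-products $\D\E,\E\D,\D\A,\A\D,\A\E,\E\A$ flanked by arbitrary matrix words.

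The crux is a telescoping identity in the bulk. Rewriting the defining relations as $\D\E - q\E\D = \D+\E$, $\D\A - q\A\D = \A$, and $\A\E - q\E\A = \A$, I read each left-hand side as the net two-site current (rate-$1$ forward move minus rate-$q$ backward move) and each right-hand side as a discrete divergence. The aim is to produce auxiliary matrices $\hat\D,\hat\E,\hat\A$ realizing these divergences so that the bulk flux at bond $(i,i+1)$ becomes a difference of the form $\langle w|\cdots\hat X_i\cdots|v\rangle - \langle w|\cdots\hat X_{i+1}\cdots|v\rangle$. Summing over $i=1,\dots,n-1$ then telescopes, all interior terms cancel, and only contributions localized at site $1$ and site $n$ survive.

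These surviving boundary terms are matched by the injection and extraction moves. At the left end the transition $\E X \overset{\alpha}{\rightharpoonup}\D X$ exchanges a leading $\E$ for a leading $\D$, and the eigenvector condition $\langle w|\E = \tfrac{1}{\alpha}\langle w|$ converts the leftover leading-$\E$ term into exactly the scalar needed to absorb the $i=1$ remainder; symmetrically $X\D\overset{\beta}{\rightharpoonup}X\E$ together with $\D|v\rangle = \tfrac{1}{\beta}|v\rangle$ handles the $i=n$ remainder at the right, so the total net flux vanishes. For the normalization, summing $f$ over all states of the sector: marking each $\A$ by $y$, the product $\langle w|(\D + y\A + \E)^n|v\rangle$ is the generating function for states by number of light particles, whose $y^r$ coefficient equals $\sum_{|\sigma|=r} f(\tau,\sigma)$, and dividing by $\langle w|\A^r|v\rangle$ is the normalizing convention reconciling this with $\Prob$, giving $Z_{n,r}$. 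Hence $\Prob((\tau,\sigma)) = f(\tau,\sigma)/Z_{n,r}$.

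I expect the main obstacle to be the bulk telescoping step: constructing the auxiliary matrices and checking that the three \emph{non-uniform} right-hand sides ($\D+\E$ for the $\D\E$ bond, but only $\A$ for the $\D\A$ and $\A\E$ bonds) fit together into a single consistent divergence, and then verifying that the constants left over at the two ends line up precisely with $\tfrac1\alpha$ and $\tfrac1\beta$. This asymmetry between the heavy–hole current and the heavy–light and light–hole currents is exactly what makes the two-species bookkeeping more delicate than the classical single-species argument.
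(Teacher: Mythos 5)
First, a point of reference: the paper does not prove Theorem \ref{ansatz} at all --- it is quoted from Uchiyama \cite{uchiyama}, and even the paper's generalized version, Theorem \ref{ansatz2}, is proved only by citing \cite[Theorem 5.2]{cw2011}. So your attempt can only be measured against the standard Matrix Ansatz argument, and your plan is precisely that argument: conservation of $r$ splits the chain into sectors, stationarity of the unnormalized weights is checked bond by bond, the bulk contributions are telescoped via auxiliary matrices, and the boundary remainders are absorbed by the eigenvector relations. Every structural claim you make is correct, and your identification of the normalization $[y^r]\langle w|(D+yA+E)^n|v\rangle=\sum f$ over the sector is right (the extra $\langle w|A^r|v\rangle$ in $Z_{n,r}$ is a convention inherited from the statement, not something the stationarity argument must produce).

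The genuine gap is that the one substantive step --- constructing $\hat D,\hat E,\hat A$ and verifying that all bond types yield a single consistent discrete divergence --- is announced as an ``aim'' and then deferred as an ``expected obstacle'' rather than executed; as written, the proof is a plan around its own missing core. Fortunately the step closes with scalars: take $\hat D=I$, $\hat E=-I$, $\hat A=0$. Then for every ordered pair $X,Y\in\{D,E,A\}$ the net out-flux through a bond carrying $XY$ equals $\langle w|\cdots(\hat XY-X\hat Y)\cdots|v\rangle$: indeed $\hat DE-D\hat E=D+E=DE-qED$, $\hat DA-D\hat A=A=DA-qAD$, $\hat AE-A\hat E=A=AE-qEA$, the reversed pairs give the negatives, and $DD$, $EE$, $AA$ give $0$, matching the absence of a transition there. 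Summing over bonds telescopes to $\langle w|\hat X_1X_2\cdots X_n|v\rangle-\langle w|X_1\cdots X_{n-1}\hat X_n|v\rangle$, and these are cancelled by the boundary fluxes: if the word begins with $E$, the left-boundary out-flux is $\alpha\langle w|EX_2\cdots|v\rangle=\langle w|X_2\cdots|v\rangle=-\langle w|\hat EX_2\cdots|v\rangle$; if it begins with $D$, the pure in-flow $-\alpha\langle w|EX_2\cdots|v\rangle$ cancels $\langle w|\hat DX_2\cdots|v\rangle$; if it begins with $A$, both the boundary flux and $\hat A$ vanish, consistent with light particles neither entering nor exiting; the right boundary is symmetric via $D|v\rangle=\tfrac1\beta|v\rangle$. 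So the ``non-uniform right-hand sides'' you flagged as the delicate point are in fact reconciled by the single choice $\hat A=0$; your proof is completable, but you must actually write this verification down.
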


This result generalizes a previous Matrix Ansatz solution for the regular PASEP of Derrida et. al. in \cite{derrida}. 

In his paper, Uchiyama provides matrices (that are neither positive or rational) that satisfy the conditions of Theorem \ref{ansatz}. From these, the matrix product yields steady state probabilities in the form of polynomials in $\alpha$, $\beta$, and $q$ with positive integer coefficients. Therefore one would hope for a combinatorial interpretation of these probabilities, with results akin to those of Corteel and Williams \cite{cw2011} for the original PASEP. Such results could yield explicit general formulas for both the desired probabilities and the partition function.  

The goal of this paper is to provide some combinatorial solutions to this two-species problem for some special cases. In Section \ref{section_q0} of this paper, we describe certain tableaux which we call multi-Catalan tableaux that give an interpretation for the steady state distributions of the two-species PASEP at $q=0$. In Section \ref{section_enumerative} we provide some enumerative results for the multi-Catalan tableaux. In Section \ref{section_mc} we describe a Markov process on the multi-Catalan tableaux that projects to the two-species PASEP at $q=0$, and which gives another proof of our main result in Section \ref{section_q0}. Finally, in Section \ref{section_q1} we define some more general multi-Catalan tableaux that we believe give an interpretation for the steady state distributions of the two-species PASEP at $q=1$. Note that our forthcoming paper with X.~G.~Viennot \cite{TAT} will give another combinatorial solution to the two-species PASEP for general $q$.

\textit{Acknowledgement.} I would like to thank Sylvie Corteel and Lauren Williams for suggesting this problem to me, and for their invaluable advice and support. I also gratefully acknowledge the hospitality of LIAFA where this research was done, and the Chateaubriand Fellowship awarded by the Embassy of France in the United States that supported this stay in Paris.

\section{Multi-Catalan tableaux}\label{section_q0}

\begin{defn} \label{rules}
A \emph{multi-Catalan tableau} of \emph{size} $n$ is a filling of a Young diagram of shape $(n,n-1,\ldots,1)$ with the symbols $\alpha,\beta$, and $x$ as follows:
\begin{enumerate}
\item Every box on the diagonal must contain an $\alpha,\beta$, or $x$.
\item A box that sees an $\alpha$ to its right and a $\beta$ below must contain an $\alpha$ or $\beta$.
\item A box that sees an $\alpha$ to its right and an $x$ below must contain a $\beta$.
\item A box that sees an $x$ to its right and a $\beta$ below must contain an $\alpha$.
\item Every other box must be empty.
\end{enumerate}
\end{defn}

In the definition above, when we refer to the symbol that a box ``sees'' to its right or below, we mean the first symbol encountered in the same row or column, respectively. For example, in the first tableau of Figure \ref{DEEAE_example}, $x$ is the first symbol that the $\beta$ in the top row sees below it. Finally, note that Rule 5 implies that all boxes in the same row and left of a $\beta$ must be empty, and also that all boxes in the same column and above an $\alpha$ must be empty.

\begin{defn}\label{weight}
The \emph{weight} $\wt(T)$ of a multi-Catalan tableau $T$ is the product of all the $\alpha$'s and $\beta$'s it contains.
\end{defn}

\begin{defn}
The \emph{type} $\type(T)$ of the tableau $T$ is the word in $\{\D, \E, \A\}^{\ast}$ that is read off from the diagonal from top to bottom, by assigning a D to $\alpha$, an E to $\beta$, and an A to $x$.
\end{defn}

\begin{defn} 
The \emph{weight} of a word $X$ in $\{\D, \E, \A\}^{\ast}$ is 
\begin{displaymath}\weight(X)=\sum_T \wt(T),
\end{displaymath}
where the sum is over all multi-Catalan tableaux $T$ such that $\type(T)=X$.
\end{defn}

\begin{thm}\label{main_thm}
Consider the two-species PASEP on a lattice of $n$ sites. Let $X$ be a state described by a word in $\{\D, \E, \A\}^n$ with $r$ $\A$'s. Let $Z^0_{n,r} = \sum_{X'} \weight(X')$ where the sum is over all words $X'$ of length $n$ with $r$ $\A$'s. Then the steady state probability of state $X$ is
\begin{displaymath}\Prob(X)=\frac{\weight(X)}{Z^0_{n,r}}.\end{displaymath}
\end{thm}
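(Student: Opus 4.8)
The plan is to deduce the theorem from Uchiyama's Matrix Ansatz (Theorem~\ref{ansatz}) specialized to $q=0$, by exhibiting an explicit representation of the resulting algebra whose matrix products reproduce the generating functions $\weight(X)$. At $q=0$ the defining relations of Theorem~\ref{ansatz} collapse to
\begin{displaymath}DE = D+E, \qquad DA = A, \qquad AE = A,\end{displaymath}
together with the boundary conditions $\langle w|E = \tfrac{1}{\alpha}\langle w|$ and $D|v\rangle = \tfrac{1}{\beta}|v\rangle$. By Theorem~\ref{ansatz} it then suffices to produce matrices $D,A,E$ and vectors $\langle w|,|v\rangle$ satisfying these five relations for which
\begin{displaymath}\langle w|\prod_{i=1}^n\big(\tau_i D+\sigma_i A+(1-\tau_i)(1-\sigma_i)E\big)|v\rangle=\weight(X)\end{displaymath}
for every state $X=(\tau,\sigma)$; the normalization $Z^0_{n,r}=\sum_{X'}\weight(X')$ will then automatically coincide with the $q=0$ specialization of the partition function $Z_{n,r}$, and the claimed formula for $\Prob(X)$ follows by substitution.

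First I would set up the matrix product as a transfer-matrix computation that builds a multi-Catalan tableau of type $X$ one column at a time, processing columns from right to left so that the symbol each box ``sees to its right'' has already been determined. Reading $X$ along the diagonal from top to bottom, the $i$-th letter selects $D$, $A$, or $E$ according to whether the diagonal box carries $\alpha$, $x$, or $\beta$, exactly as in the definition of $\type$. The indices of the matrices should record precisely the data that Rules~2--4 of Definition~\ref{rules} consult, namely, for each row processed so far, the symbol that row currently sees to its right; this is the only information needed to decide the admissible fillings of the boxes in the next column to be added. I would then define the action of $D$, $A$, $E$ so that applying a matrix simultaneously reads the diagonal symbol at the bottom of the column and fills in the forced or free entries above it according to Rules~2--5, weighting each $\alpha$ or $\beta$ that appears in the column, including its diagonal box, by the corresponding variable in accordance with Definition~\ref{weight}.

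With such a representation in hand, the three bulk relations become local statements about fillings. The two-term relation $DE=D+E$ should encode the branching of Rule~2, where a box seeing an $\alpha$ to its right and a $\beta$ below may be filled by either an $\alpha$ or a $\beta$; the single-term relations $DA=A$ and $AE=A$ should encode the forced choices of Rules~3 and~4, where the configurations $\alpha$-over-$x$ and $x$-over-$\beta$ each admit a unique filling. The boundary conditions $\langle w|E=\tfrac1\alpha\langle w|$ and $D|v\rangle=\tfrac1\beta|v\rangle$ should account for the $\alpha$'s and $\beta$'s forced along the boundary of the staircase, after a suitable normalization of the matrices that converts the factors $\tfrac1\alpha,\tfrac1\beta$ into the positive powers of $\alpha,\beta$ counted by $\wt$.

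The hard part will be choosing the representation so that the ``sees to the right / sees below'' bookkeeping of Rules~2--4 is faithfully carried by a cleanly indexed set of transfer states, and then verifying that the quadratic relations hold \emph{exactly} rather than merely up to boundary corrections. One must check in particular that the forced fillings of Rules~3--4 and the empty-box Rule~5 interact correctly with the two-way branching of Rule~2, so that the telescoping matrix product enumerates \emph{all} admissible tableaux of type $X$ with the correct weight and with no overcounting. Once this verification is complete, the theorem is immediate from Theorem~\ref{ansatz}.
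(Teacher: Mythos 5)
Your high-level plan --- reduce to the Matrix Ansatz at $q=0$ and show that $\weight(X)$ is reproduced by a matrix product --- is the right skeleton, and your dictionary between the relations $DE=D+E$, $DA=A$, $AE=A$ and Rules 2--4 of Definition \ref{rules} is correct. But as written the proposal has a genuine gap: the matrices $D,E,A$ and the vectors $\langle w|,|v\rangle$ are never actually defined, and the five relations are never verified. You explicitly defer ``the hard part'' (choosing the transfer states and checking that the quadratic relations hold exactly), yet that verification \emph{is} the entire mathematical content of the theorem; everything before it is a restatement of what needs to be proved. Two further technical points are glossed over. First, in a column-by-column transfer construction the state space (the vector of signals seen by the active rows) grows with $n$, so the claimed relations cannot hold as identities of fixed finite matrices; you would need either an infinite-dimensional representation or the weaker ``sandwiched'' relations $\langle w|X(\cdot)Y|v\rangle$, and you never say which, nor check either. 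Second, the boundary conditions of Theorem \ref{ansatz} produce factors $\tfrac1\alpha$ and $\tfrac1\beta$, while $\wt$ accumulates positive powers of $\alpha$ and $\beta$; ``a suitable normalization'' is not automatic, because $DE=D+E$ is inhomogeneous, so rescaling $D$ and $E$ changes the relation. The paper resolves exactly this by first proving a generalized Ansatz (Theorem \ref{ansatz2}) with a free constant $\lambda$ in all five relations and then taking $\lambda=\alpha\beta$.

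The most economical way to close the gap --- and it is what the paper does --- is to skip the representation entirely and verify the sandwiched relations directly on the weight generating functions by induction on $n$: any word $W$ with fewer than $n$ letters $\A$ contains a factor $\D\E$, $\D\A$, or $\A\E$, or begins with $\E$, or ends with $\D$; in each case a chosen corner box (or empty boundary row/column) of the condensed tableau is either free ($\alpha$ or $\beta$, giving the two-term identity $\weight(X\D\E Y)=\alpha\beta\,\weight(X\D Y)+\alpha\beta\,\weight(X\E Y)$) or forced, and deleting the corresponding row or column is a weight-preserving bijection onto tableaux of a word of length $n-1$. If you insist on the transfer-matrix route, you must write down the operators, prove the relations, and show the products equal $\weight(X)$ --- at which point you will find yourself re-deriving these same corner-removal bijections, so the representation buys you nothing over the direct inductive argument.
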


\begin{figure}[h]
\centering
\includegraphics[width=0.6\textwidth]{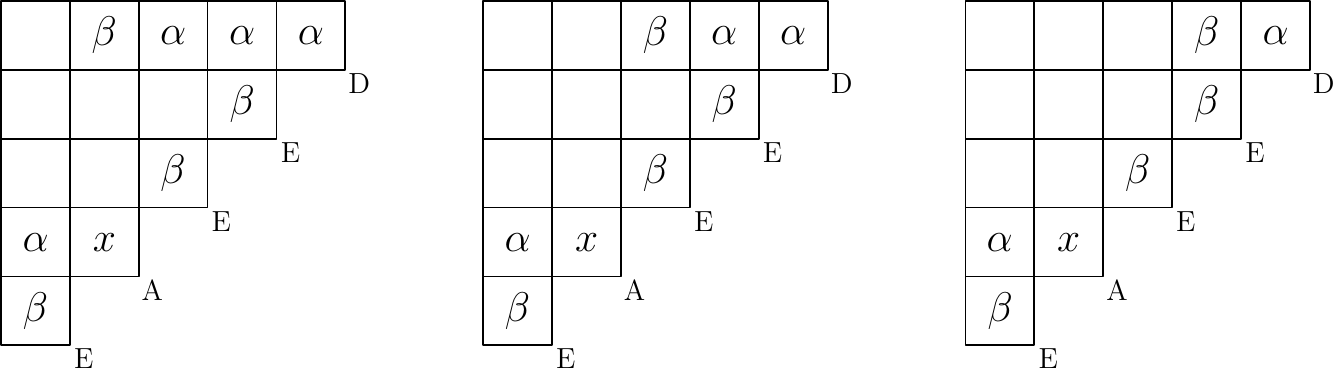}
\caption{These are all possible multi-Catalan tableaux of type $\D\E\E\A\E$. Their weights are $\alpha^4\beta^4, \alpha^3\beta^4$, and $\alpha^2\beta^4$ respectively.}
\noindent
\label{DEEAE_example}
\end{figure}

We show as an example all valid tableaux and their weights for the word DEEAE in Figure \ref{DEEAE_example}. Theorem \ref{main_thm} implies that
\begin{displaymath}
\Prob(\D\E\A\E\E) = \frac{1}{Z^0_{5,1}} \left( \alpha^4\beta^4+\alpha^3\beta^4+\alpha^2\beta^4\right).
\end{displaymath}

\begin{defn} 
A \emph{D-row} is a row whose right-most box contains an $\alpha$, and an \emph{A-row} is one whose right-most box contains an $x$. An \emph{E-column} is a column whose bottom-most box contains a $\beta$, and an \emph{A-column} is one whose bottom-most box contains an $x$. Then a \emph{DE box} is one that lies in a D-row and an E-column, (and correspondingly for DA, AE, and AA boxes).
\end{defn}

Note that we can ignore the rows with right-most box containing a $\beta$ or columns with bottom-most box containing an $\alpha$ because they are automatically required to be empty according to Definition \ref{rules}.

To connect back to the two-species TASEP, let a state of the TASEP be described by a word $X$ in $\{\D, \E, \A\}^n$. Then we fill a Young diagram of shape $(n,n-1,\ldots,1)$ as follows: from top to bottom, we fill the diagonal with symbols $\alpha,\beta$, and $x$ by reading the word $X$ from left to right, and placing an $\alpha$ for a D, a $\beta$ for an E, and an $x$ for an A. Then any valid filling of the rest of the diagram according to the rules (2)-(5) of Definition \ref{rules} will result in a multi-Catalan tableau of type $X$. 

\subsection{Condensed multi-Catalan tableaux}
We provide a condensed version of the characterization of the multi-Catalan tableaux, which offers a more natural proof of our results. We introduce the following definitions.

\begin{wrapfigure}[10]{r}{0.37\textwidth}
\centering
\includegraphics[width=0.2\textwidth]{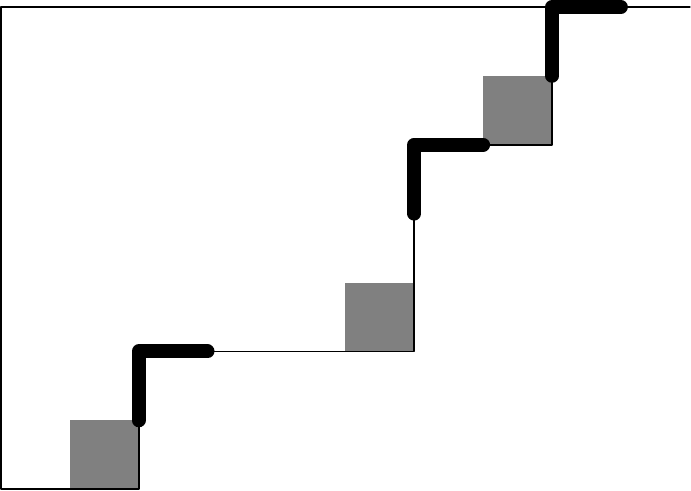}
\caption{The grey squares mark the corners of the diagram, and the darkened edges mark the inner corners.}
\noindent
\label{corner2}
\end{wrapfigure}

\begin{defn} An \emph{inner corner} of the multi-Catalan tableau is a consecutive pair of a west edge and a south edge on the boundary of the tableau. Figure \ref{corner2} shows some examples. A \emph{corner} is simply the box that is both the right-most box of some row and the bottom-most box of some column.
\end{defn}

\begin{defn}\label{condensed_def}
A \emph{condensed} multi-Catalan tableau $T$ of \textbf{size} $(n,k,r)$ is a Young diagram $Y=Y(T)$ with at least $r$ inner corners, that is justified to the northeast and contained in a rectangle of size $k+r \times n-k$. $Y$ is identified with the lattice path $L=L(T)$ that takes the steps south and west and follows the southeast border of $Y$. In addition, we have the following: 
\begin{itemize}
\item Each edge of $L$ is labelled with a D, E, or A such that exactly $r$ inner corners have both edges labeled with $\A$'s, and the remaining west edges have the label E, and the remaining south edges have the label D. 
\item An E-column is a column with an E labeling its bottom-most edge. (An A-column is defined correspondingly.)
\item A D-row is a row with a D labeling its right-most edge. (An A-row is defined correspondingly.)
\item A DE box is a box in a D-row and an E-column. (The DA, AE, AA boxes are defined correspondingly.)
\end{itemize}
Finally, we fill $T$ with $\alpha$'s and $\beta$'s according to the following rules:
\begin{enumerate}[i.]
\item A box in the same row and left of a $\beta$ must be empty.
\item A box in the same column and above of an $\alpha$ must be empty.
\item A DE box that is not forced to be empty must contain an $\alpha$ or a $\beta$.
\item A DA box that is not forced to be empty must contain a $\beta$.
\item An AE box that is not forced to be empty must contain an $\alpha$.
\end{enumerate}
\end{defn}

We identify the Young diagram $Y$ with a partition $\lambda = \lambda(T)$, which we also call the \emph{shape} of $Y$ and of $T$. Specifically, $\lambda=(\lambda_1,\ldots,\lambda_{k+r})$ where $\lambda_i$ is the number of boxes of $Y$ in row $i$ of the $k+r \times n-k$ rectangle.

\begin{defn}
The \emph{type} of the condensed version of the multi-Catalan tableau $T$ is the word in $\{\D, \E, \A\}^{\ast}$ that is read from the labels on the lattice path $L(T)$ from top to bottom, but with A counted only once for each pair of a west A-edge and a south A-edge in an inner corner of $T$.
\end{defn}

\begin{defn}
The \emph{weight} of the condensed tableau is the weight of the symbols inside it times the weight of the lattice path $L(T)$, which is obtained by giving each D edge weight $\alpha$ and each E edge weight $\beta$. In particular, for a tableau of size $(n,k,r)$, the weight of $L(T)$ is $\alpha^k \beta^{n-k-r}$. 
\end{defn}

In Figure \ref{condensed}, we demonstrate by example the conversion from a staircase multi-Catalan tableau to a condensed multi-Catalan tableau. Specifically, we remove the diagonal from the staircase along with the E-rows and D-columns, and then collapse all the DE, DA, AE, and AA boxes. Then we label the boundary edges of the tableau by labeling: a vertical edge with a D if it belongs to a D-row and with an A if it belongs to an A-row, and a horizontal edge with an E if it belongs to an E-column and with an A if it belongs to an A-column. It is easy to check that the types and weights of the two tableaux are equal.

Another way to obtain a condensed tableau from a word $X$  in $\{\D, \E, \A\}^{\ast}$ is to draw a lattice path $L=L(X)$ with steps south and west, by reading $X$ from left to right and drawing a D-labeled south edge for a D, an E-labeled west edge for an E, and an A-labeled pair of a west edge and a south edge for an A. $L$ is then identified with the Young diagram $Y$ whose southeast border it coincides with. (More precisely, $Y$ has shape $\lambda=(\lambda_1,\lambda_2,\ldots)$, where $\lambda_i$ is the number of $\E$'s and $\A$'s in $X$ following the $i$th instance of either D or A.) Any filling of $Y$ according to rules (i)-(v) of Definition \ref{condensed_def} results in a tableau of type $X$.

\begin{figure}[h]
\centering
\includegraphics[width=0.5\textwidth]{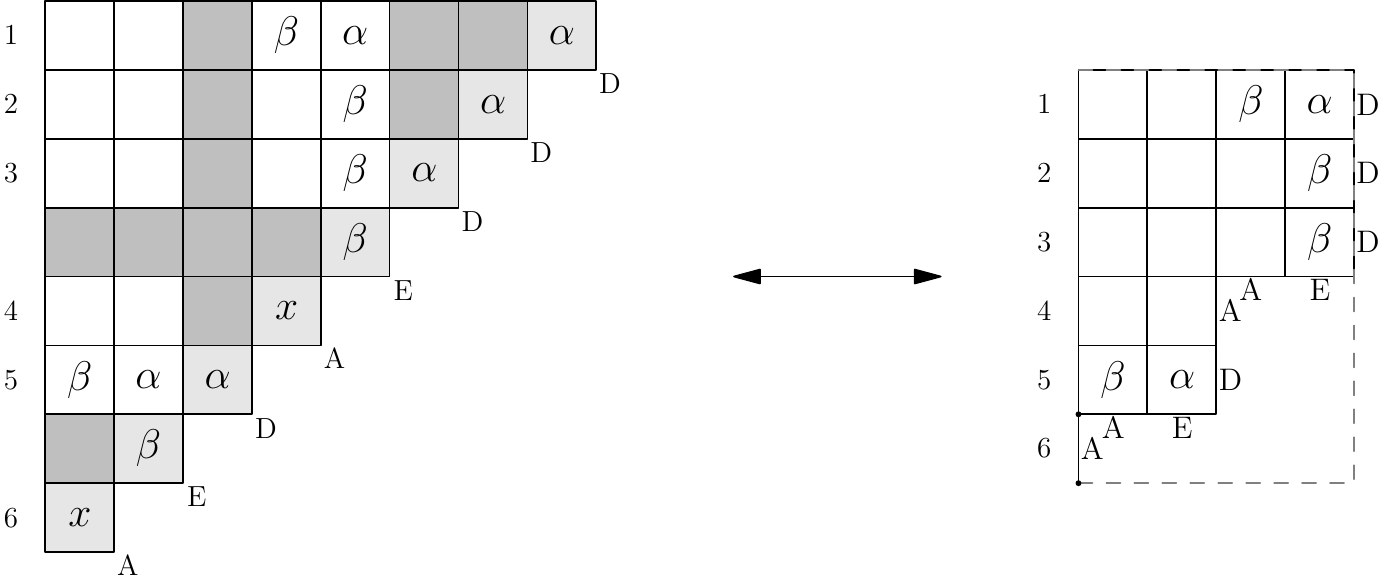}
\caption{The staircase multi-Catalan tableau and its corresponding condensed multi-Catalan tableau with size $(10,4,2)$ and shape $\lambda=(4,4,4,2,2,0)$ have type $\D\D\D\E\A\D\E\A$ and weight $\alpha^6\beta^6$. The collapsing together of the white boxes from the staircase tableau is the condensed tableau.}
\noindent
\label{condensed}
\end{figure}

Since the staircase version of the multi-Catalan tableaux is in simple bijection with the condensed version, we will call them both multi-Catalan tableaux, and refer to them interchangeably.

We now give a proof of Theorem \ref{main_thm}. To facilitate our proof, we provide a more flexible Matrix Ansatz that generalizes Theorem \ref{ansatz} with the same argument as in \cite[Theorem 5.2]{cw2011}. For a word $W \in \{\D, \E, \A\}^n$ with $r$ $\A$'s, we define unnormalized weights $f(W)$ (or equivalently $f((\tau,\sigma))$ for the corresponding notation $(\tau,\sigma)$ that represents $W$) which satisfy
\[
\Prob(W)=f(W)/Z_{n,r}
\]
where $Z_{n,r}=\sum_{W'}f(W')$ where the sum is over all words $W'$ of length $n$ and with $r$ $\A$'s.

\begin{thm}\label{ansatz2}
Let $\lambda$ be a constant. Let $\langle w|$ and $|v\rangle$ be row and column vectors with $\langle w| |v\rangle=1$. Let $D$, $E$, and $A$ be matrices such that for any words $X$ and $Y$ in $\{D, E, A\}^{\ast}$, the following conditions are satisfied:
\begin{enumerate}[(I)]
\item $\langle w| X(DE-qED)Y|v \rangle=\lambda \langle w| X(D+E)Y|v\rangle$,
\item $\langle w| X(DA-qAD)Y|v \rangle=\lambda \langle w| XAY|v\rangle$,
\item $\langle w| X(AE-qEA)Y|v \rangle=\lambda \langle w| XAY|v\rangle$,
\item $\beta \langle w| XD |v \rangle=\lambda \langle w| X|v\rangle$,
\item $\alpha \langle w| EY |v \rangle=\lambda \langle w| Y|v\rangle$.
\end{enumerate} 
Then for any state $(\tau,\sigma)$ of the two-species PASEP of length $n$, 
\[
f((\tau,\sigma))= \langle w| \prod_{i=1}^n \tau_i D + \sigma_i A + (1-\tau_i)(1-\sigma_i)E|v \rangle.
\] 
\end{thm}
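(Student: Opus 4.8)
The plan is to show that the matrix product defining $f$ is, up to an overall constant, the stationary distribution of the chain; equivalently, that $f$ is a left null vector of the generator. First I would record two structural facts: every transition conserves the number $r$ of type-$1$ particles (the bulk moves only permute adjacent letters, and the boundary moves only interconvert $D$ and $E$), and on the finite set of words with exactly $r$ $A$'s the chain is irreducible, so its stationary measure is unique up to scale. Hence it suffices to verify the balance equation (inflow $=$ outflow) at every state $W=w_1\cdots w_n$; the normalization $Z_{n,r}=\sum_{W'}f(W')$ then gives $\Prob(W)=f(W)/Z_{n,r}$. Collecting transitions by the bond or boundary site where they act (the common factor $1/(n+1)$ drops out), each contribution to inflow minus outflow becomes a matrix product $\langle w|\cdots|v\rangle$ with a local operator inserted: at bond $(i,i+1)$ an operator $h$ sending $DE\mapsto qED-DE$, $ED\mapsto DE-qED$ and analogously for the $DA/AD$ and $AE/EA$ pairs (and killing $DD,EE,AA$); at the left end an operator $b_L$ with $b_L(D)=\alpha E$, $b_L(E)=-\alpha E$, $b_L(A)=0$; at the right end $b_R$ with $b_R(D)=-\beta D$, $b_R(E)=\beta D$, $b_R(A)=0$.

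The crux is to make the bulk sum telescope. I would introduce auxiliary scalar matrices $\tilde D=-\lambda I$, $\tilde E=\lambda I$, $\tilde A=0$ and verify, case by case on the pair $C_iC_{i+1}$ (where $C_j\in\{D,E,A\}$ is the matrix read from $w_j$), the sandwiched identity
\[
\langle w|X\,h(C_iC_{i+1})\,Y|v\rangle=\langle w|X\,(\tilde C_iC_{i+1}-C_i\tilde C_{i+1})\,Y|v\rangle
\]
for all words $X,Y$. For example $h(DE)=qED-DE$ equals $-\lambda(D+E)$ under $\langle w|X(\cdot)Y|v\rangle$ by relation (I), and $-\lambda(D+E)=\tilde DE-D\tilde E$; the $DA$ and $AE$ pairs use (II) and (III) in the same way. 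Setting $U_i=\langle w|C_1\cdots C_{i-1}\tilde C_iC_{i+1}\cdots C_n|v\rangle$, the bond-$i$ term equals $U_i-U_{i+1}$, so the entire bulk sum collapses to $U_1-U_n$.

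It then remains to cancel $U_1-U_n$ against the two boundary insertions, and this is exactly what relations (IV) and (V) accomplish. At the left end, combining $U_1$ with $b_L$ gives, for each letter $C$, the single-site insertion $\tilde C+b_L(C)$; then $\langle w|(\tilde C+b_L(C))Y|v\rangle=0$ is precisely relation (V) for $C\in\{D,E\}$ and trivial for $C=A$. At the right end, combining $-U_n$ with $b_R$ gives the insertion $-\tilde C+b_R(C)$, which vanishes by relation (IV). Adding the bulk and boundary contributions then yields $0$ at every $W$, so $f$ is stationary and, by uniqueness of the stationary measure on the $r$-sector, $\Prob(W)=f(W)/Z_{n,r}$.

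I expect the main obstacle to be the creative step: guessing the auxiliary matrices $\tilde D,\tilde E,\tilde A$ and recognizing that the ``$+\lambda$'' right-hand sides in (I)--(V) are calibrated so that (a) each bulk two-site term factors as the discrete divergence $\tilde C_iC_{i+1}-C_i\tilde C_{i+1}$, and (b) the leftover terms $U_1$ and $U_n$ are annihilated by (V) and (IV) respectively. Once this is seen, the remaining work---tabulating $h,b_L,b_R$ and matching signs across the finitely many cases---is routine bookkeeping. This is the same telescoping mechanism as in \cite[Theorem 5.2]{cw2011}, now carried out for the three letters $\{D,E,A\}$ rather than two.
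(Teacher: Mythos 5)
Your proof is correct and follows essentially the same route as the paper, which simply defers to the telescoping cancellation argument of \cite[Theorem 5.2]{cw2011}: your auxiliary matrices $\tilde D=-\lambda I$, $\tilde E=\lambda I$, $\tilde A=0$ make each bulk term a discrete divergence (all nine cases of the sandwiched identity check out against relations (I)--(III)), and the surviving terms $U_1$ and $-U_n$ are killed by (V) and (IV) exactly as you describe. The only ingredient you assert rather than prove is irreducibility of the chain on each fixed-$r$ sector, which is standard and is also implicit in the cited argument.
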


The proof of Theorem \ref{ansatz2} follows exactly that of \cite[Theorem 5.2]{cw2011}. Note that the above implies that 
\[
Z_{n,r}=\frac{1}{\langle w| A^r |v \rangle} [y^r]\langle w| (D+yA+E)^n | v \rangle.
\]

\begin{proof} \emph{(Theorem \ref{main_thm})} The Matrix Ansatz of Theorem \ref{ansatz2} implies that the steady state probabilities for the two-species PASEP satisfy certain recurrences (that in turn determine all probabilities). The strategy of our proof is to show that the weight generating function for multi-Catalan tableaux of fixed type satisfies the same recurrences. Specifically, we use these recurrences with the constant $\lambda =\alpha\beta$, to show by induction that for $W$ a word in $\{\D, \E, \A\}^n$ with $r$ $\A$'s, 
\begin{equation}\label{prob}
f(W) = \weight(W).
\end{equation}

By Theorem \ref{ansatz2}, $f(W) =\langle w| W | v \rangle$, where we interpret $W$ as a product of the matrices $D$ , $E$ , and $A$  in the order those letters occur in the word $W \in \{\D, \E, \A\}^n$.
To start, if $n=1$, $W$ is either the word ``D'', ``E'', or ``A''. In each of those cases, there is a single tableau for $W$, and it is a trivial tableau. We obtain $\weight(\mbox{D})=\alpha$, $\weight(\mbox{E})=\beta$, and $\weight(\mbox{A})=1$. From Theorem \ref{ansatz}, it is clear that the base case indeed satisfies Equation \ref{prob}.



Now suppose that any word of length less than $n$ satisfies Equation \ref{prob}. Let $W$ be a word of length $n$ with $r$ $\A$'s. If $r=n$, then $\weight(W)=1$, and Equation \ref{prob} trivially holds. Thus assume $r<n$. Then at least one of the following must occur:
\begin{enumerate}[i.]
\item $W$ contains an instance of ``DE''.
\item $W$ contains an instance of ``DA''.
\item $W$ contains an instance of ``AE''.
\item $W$ begins with an E.
\item $W$ ends with a D.
\end{enumerate} 
We will express $\weight(W)$ in terms of the weight of some smaller word(s) of length $n-1$ based on the occurrence of one of the above. Let $T$ be any tableau of type $W$. Throughout the following, we let $X$ and $Y$ represent some arbitrary words in $\{\D, \E, \A\}^{\ast}$.

\emph{(i.) $W$ contains an instance of ``DE''.} Then $T$ has some DE corner box and we can write $W=X\D\E Y$. We call this DE corner box the \emph{chosen} corner. That chosen corner must contain either an $\alpha$ or a $\beta$, so we can decompose the possible fillings of $T$ into two cases as in Figure \ref{ansatz_0}. If the chosen corner contains an $\alpha$, then all the boxes above it are empty, and so its entire column has no effect on the rest of the tableau. Thus such $T$ can be mapped to a filling of a smaller tableau with that column removed, which would have type $X\D Y$. This map gives a bijection between tableaux of type $X\D\E  Y$ with an $\alpha$ in the chosen DE corner and tableaux of type $X\D Y$. The removed column with the $\alpha$ in its bottom-most box has total weight $\alpha\beta$.\footnote{In the total weight of a column, we include the weight of the bottom-most edge, which is a component of the southeast boundary of $T$. When the column removed is an E-column, the weight of the boundary component is $\beta$, so the total weight of the column with an $\alpha$ at the bottom is $\alpha\beta$. Similar reasoning is used in the other cases.} 

Similarly, if the chosen corner contains a $\beta$, then the boxes to its left must be empty, and so its entire row has no effect on the rest of the tableau. Hence such $T$ can be mapped to a smaller tableau with that row removed, which would have type $X\E Y$. This map gives a bijection between tableaux of type $X\D\E  Y$ with a $\beta$ in the chosen DE corner and tableaux of type $X\E Y$. The removed row with the $\beta$ in its right-most box also has total weight $\alpha\beta$. Consequently, we have the sum of the weights of the fillings:
\begin{displaymath} \weight(X\D\E Y) = \weight(X\D Y) \cdot \alpha\beta + \weight(X\E Y) \cdot \alpha\beta. \end{displaymath}
By the induction hypothesis, since the lengths of $X\D Y$ and $X\E Y$ are both $n-1$, we have $\weight(X\D Y) =  \langle w| X D Y |v \rangle$ and $\weight(X\E Y) = \langle w| X E Y |v \rangle$. By Theorem \ref{ansatz2} with $\lambda=\alpha\beta$, 
\[
(\alpha\beta) \langle w| X (D+E) Y | v \rangle = \langle w| X DE Y | v \rangle  = \langle w| W | v \rangle.
\]
It follows that $W$ satisfies Equation \ref{prob}.

\begin{figure}
\centering
\includegraphics[width=0.7\textwidth]{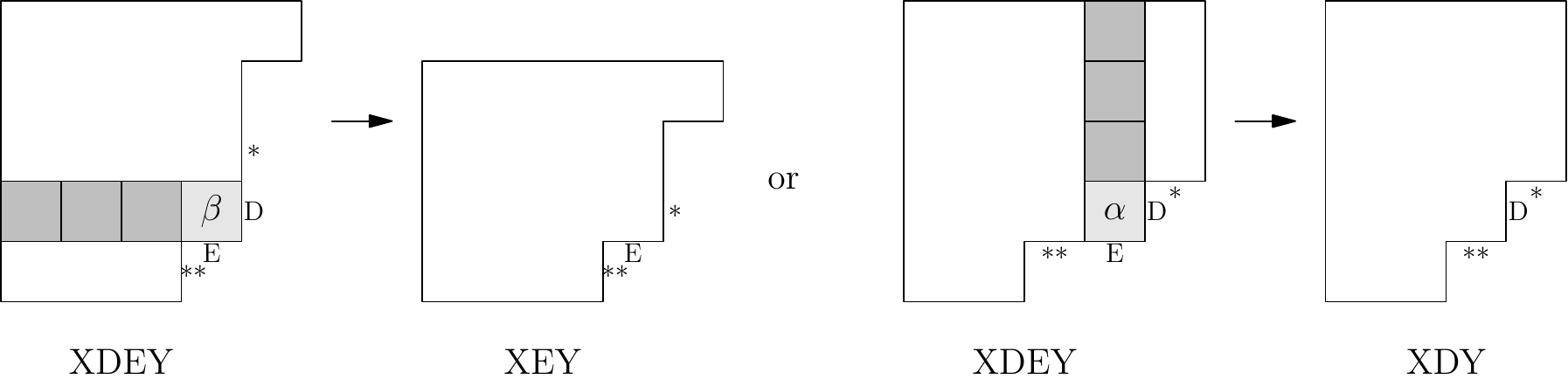}
\caption{The chosen corner is a DE box.}
\noindent
\label{ansatz_0}
\end{figure}

\begin{figure}
\centering
\includegraphics[width=0.7\textwidth]{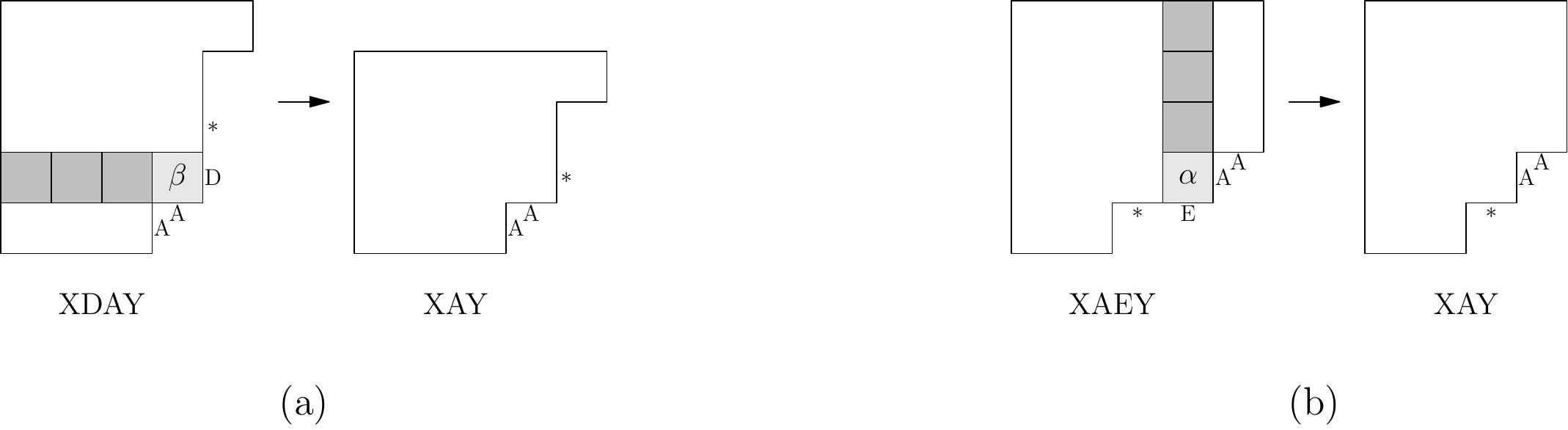}
\caption{The chosen corner is (a) a DA box and (b) an AE box.}
\noindent
\label{ansatz_1}
\end{figure}

\emph{(ii.) $W$ contains an instance of ``DA''.} Then $T$ has some DA corner box and we can write $W=X\D\A  Y$. That box necessarily contains a $\beta$, so the boxes to its left are empty and the entire row has no effect on the rest of the tableau. Thus we can map $T$ to a filling of a smaller tableau with that row removed, which would have type $X\A Y$, as in Figure \ref{ansatz_1} (a). This map gives a bijection between tableaux of type $X\D\A  Y$ and tableaux of type $X\A Y$. The removed D-row with the $\beta$ in its right-most box has total weight $\alpha\beta$. Thus we obtain the sum of the weights of the fillings:
\begin{displaymath} \weight(X\D\A Y) = \weight(X\A Y) \cdot \alpha\beta.\end{displaymath}
Similar reasoning to the DE case completes the argument.

\emph{(iii.) $W$ contains an instance of ``AE''.} Then $T$ has some AE corner box and we can write $W=X\A\E  Y$. That box necessarily contains an $\alpha$, so the boxes above it are empty and the entire column has no effect on the rest of the tableau. Thus we can map $T$ to a filling of a smaller tableau with that column removed, which would have type $X\A Y$, as in Figure \ref{ansatz_1} (b). This map is a bijection between tableaux of type $X\A\E  Y$ and tableaux of type $X\A Y$. The removed E-column with the $\alpha$ in its bottom-most box has total weight $\alpha\beta$. Thus we obtain the sum of the weights of the fillings:
\begin{displaymath} \weight(X\A\E Y) = \weight(X\A Y) \cdot \alpha\beta.\end{displaymath}
Similar reasoning to the DE case completes the argument.
 
\emph{(iv.) $W$ begins with an E.} Then $T$ has one or more empty E columns on the east end of $Y(T)$. We can write $W=\E X$. The presence of this empty column does not affect the filling of $T$, so its removal would result in a map to a smaller tableau of type $X$ with the same filling. This map is a bijection between tableaux of type $\E X$ and tableaux of type $X$. The removed E-column has total weight $\beta$. Thus we obtain the sum of the weights of the fillings:
\begin{displaymath} \weight(\E X) =\weight(X)\cdot \beta.\end{displaymath}
Similar reasoning to the DE case completes the argument.

\emph{(v.) $W$ ends with a D.} Then $T$ has one or more empty D rows on the south end of $Y(T)$. We can write $W=X\D$. The presence of this empty row does not affect the filling of $T$, so its removal would result in a map to a smaller tableau of type $X$ with the same filling. This map is a bijection between tableaux of type $X\D$ and tableaux of type $X$.  Thus we obtain the sum of the weights of the fillings:
\begin{displaymath}\weight(X\D) = \weight(X) \cdot \alpha.\end{displaymath}
Similar reasoning to the DE case completes the argument.

From the above cases, we obtain that any word $W$ of length $n$ satisfies Equation \ref{prob}, which is the desired result.
\end{proof}

\section{Enumeration of multi-Catalan tableaux}\label{section_enumerative}

Building on some enumerative results in \cite{mandelshtam} for regular Catalan tableaux that correspond to the regular one-species PASEP, we can deduce the following for the multi-Catalan tableaux.


\begin{thm}\label{thm_total}
 The number of multi-Catalan tableaux corresponding to a two-species PASEP at $q=0$ of size $n$ and with $r$ particles of type 1 is
 \begin{displaymath}Z^0_{n,r}(\alpha=\beta=1) = \frac{2(r+1)}{n+r+2}{2n+1 \choose n-r}.\end{displaymath}
\end{thm}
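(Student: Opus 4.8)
The plan is to first turn Theorem~\ref{thm_total} into a pure counting statement, and then evaluate that count by splitting each configuration at its type-$1$ particles and feeding the pieces into a generating-function identity. Since $\wt(T)$ is the product of the $\alpha$'s and $\beta$'s of $T$, setting $\alpha=\beta=1$ makes $\weight(X)$ the \emph{number} of multi-Catalan tableaux of type $X$, so $Z^0_{n,r}(\alpha=\beta=1)$ is exactly the total number of multi-Catalan tableaux of size $n$ with $r$ $\A$'s. By Theorem~\ref{ansatz2} and the remark after it, this equals $[y^r]\langle w|(D+yA+E)^n|v\rangle$ evaluated at $q=0,\ \alpha=\beta=1$, where the normalization is harmless because $\langle w|A^r|v\rangle=\weight(\A^r)=1$. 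Writing $N_{n,r}:=Z^0_{n,r}(\alpha=\beta=1)$, the goal becomes to compute $N_{n,r}=[y^r]\langle w|(D+yA+E)^n|v\rangle$.

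The heart of the argument is a decomposition at the $\A$'s. At $q=0,\ \alpha=\beta=1$ the relations of Theorem~\ref{ansatz2} reduce (in context) to $DE=D+E$, $DA=A$, $AE=A$, together with $\langle w|E=\langle w|$ and $D|v\rangle=|v\rangle$; iterating gives $D^{j}A=A$ and $AE^{j}=A$. I would use these to show the $\A$'s act as separators: an $A$ absorbs all trailing $D$'s to its left and all leading $E$'s to its right, while no reduction crosses an $A$ (as $AD$ and $EA$ never reduce). The clean mechanism is that repeated use of $DE=D+E$ rewrites any one-species word $B\in\{\D,\E\}^{\ast}$ as an integer combination $\sum c_{ij}\,E^iD^j$ of ``sorted'' words, whence $\langle w|E^iD^j|v\rangle=1$ forces $v(B):=\langle w|B|v\rangle=\sum c_{ij}$, while $AE^iD^jA=A^2$ forces $ABA=\big(\sum c_{ij}\big)A^2=v(B)\,A^2$ (with the analogous boundary versions $\langle w|BA=v(B)\langle w|A$ and $AB|v\rangle=v(B)A|v\rangle$). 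Splitting a word with $r$ $\A$'s into $r+1$ one-species blocks $B_0,\dots,B_r$ then gives $\langle w|B_0AB_1A\cdots AB_r|v\rangle=\prod_{i=0}^{r}v(B_i)$. Summing over block contents, and invoking the enumerative input from \cite{mandelshtam} that $\sum_{B\in\{\D,\E\}^{a}}v(B)=C_{a+1}$ (the total number of regular Catalan tableaux of size $a$), yields
\[
N_{n,r}=\sum_{a_0+\cdots+a_r=n-r}\ \prod_{i=0}^{r}C_{a_i+1}=[t^{\,n-r}]\,\phi(t)^{\,r+1},\qquad \phi(t)=\sum_{a\ge0}C_{a+1}t^{a}.
\]
Establishing this factorization rigorously—in particular verifying that the two boundary blocks behave exactly like the interior ones—is the step I expect to require the most care.

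The rest is a routine generating-function computation. With $\mathcal C(t)=\sum_{m\ge0}C_m t^m$ and $\mathcal C=1+t\mathcal C^{2}$ one gets $t\phi=\mathcal C-1=t\mathcal C^{2}$, so $\phi=\mathcal C^{2}$ and $\phi^{r+1}=\mathcal C^{\,2r+2}$. Applying the classical power identity $[t^{k}]\mathcal C(t)^{s}=\frac{s}{2k+s}\binom{2k+s}{k}$ with $s=2r+2$ and $k=n-r$ gives $N_{n,r}=\frac{r+1}{n+1}\binom{2n+2}{n-r}$, which one rewrites using $\binom{2n+2}{n-r}=\frac{2(n+1)}{n+r+2}\binom{2n+1}{n-r}$ as $\frac{2(r+1)}{n+r+2}\binom{2n+1}{n-r}$, the claimed value.

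As sanity checks: at $r=0$ the formula returns $C_{n+1}$, matching the size-$n$ Catalan-tableau count, and the small cases $N_{1,\bullet}=(2,1)$ and $N_{2,\bullet}=(5,4,1)$ agree. One may also note that the answer equals $\binom{2n+1}{n-r}-\binom{2n+1}{n-r-1}$, the reflection-principle count of nonnegative $\pm1$ lattice paths of length $2n+1$ ending at height $2r+1$, which explains its ballot-number shape and suggests an alternative bijective proof should one be desired.
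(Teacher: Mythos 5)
Your proposal is correct, and it reaches the key intermediate identity $N_{n,r}=\sum_{a_0+\cdots+a_r=n-r}\prod_i C_{a_i+1}$ by a genuinely different route than the paper. The paper argues directly on the tableaux: it observes that every box in an A-row or A-column is forced (empty or determined by rules (iv)--(v)), and that every box north of an A-row or west of an A-column is forced to be empty, so deleting the $r$ A-rows/columns together with everything north/west of them splits the tableau into $r+1$ independent ordinary Catalan tableaux of sizes summing to $n-r$; the convolution then falls out bijectively. You instead work algebraically through the Matrix Ansatz at $q=0$, $\alpha=\beta=1$, normal-ordering each one-species block to $\sum c_{ij}E^iD^j$ and using $DA=A$, $AE=A$ to show the $\A$'s act as separators, so the bracket factors as $\prod v(B_i)$. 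Both arguments ultimately consume the same external input ($\sum_{|B|=a}v(B)=C_{a+1}$ from the Catalan-tableau count of \cite{mandelshtam}), and your closing generating-function step ($\phi=\mathcal C^2$, the power identity $[t^k]\mathcal C^s=\tfrac{s}{2k+s}\binom{2k+s}{k}$, and the rewriting into the stated ballot-number form) is carried out correctly and in more detail than the paper, which only gestures at ``the appropriately chosen coefficient of the convolution.'' The paper's combinatorial decomposition buys a cleaner refinement (it immediately yields the Narayana refinement of the next theorem by tracking D-rows in each piece), while your algebraic route has the advantage of making the independence of the blocks a formal consequence of the relations $DA=A$, $AE=A$ rather than a structural claim about fillings; the one step you rightly flag as needing care, the uniform treatment of the boundary blocks via $\langle w\vert E^iD^jA=\langle w\vert A$ and $AE^iD^j\vert v\rangle=A\vert v\rangle$, does go through since relations (I)--(V) of Theorem \ref{ansatz2} hold with arbitrary words on either side.
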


\begin{proof}
We make two observations about the structure of the tableaux. First, any box that lies in an A-row or column is either empty or automatically determined by the rules (iv)-(v) from Definition \ref{condensed_def}. Second, any box that lies \emph{left} of an A-column or \emph{above} an A-row must be empty. In particular, note that any box that lies in an A-column is either empty if ther$\E$'s already a $\beta$ to the right in the same row, or is forced to contain a $\beta$ otherwise. In both of these cases, any box to the left of that A-column must be empty. Similarly, any box that lies in an A-row is either empty if ther$\E$'s already an $\alpha$ below in the same column, or must contain an $\alpha$ otherwise. In both of these cases, any box above that A-row must be empty. Figure \ref{fig_multicatalan_example} shows an example of this structure.

Consequently, the filling of the multi-Catalan tableau can be recreated from the fillings of just the DE boxes that do not lie north or west of any A-rows or columns. Thus to enumerate these fillings, we can remove all the boxes that lie in the $r$ A-rows and columns along with all the boxes respectively north and west of these rows and columns. We are left with a disjointed set of $r+1$ smaller tableaux, each of which is a multi-Catalan tableaus whose type has zero $\A$'s. The sum of the sizes of these $r+1$ tableaux is $n-r$. 

Multi-Catalan tableaux whose type has zero $\A$'s are the same as the Catalan tableaux from \cite{mandelshtam}, which are a specialization of a number of objects that are well known in the literature. In particular, the number of such tableaux of size $n$ is the Catalan number $C_{n+1} = \frac{1}{n+1}{2n+2 \choose n+1}$. We obtain the equation in the theorem as the appropriately chosen coefficient of the convolution of $r+1$ Catalan numbers.
\end{proof}

\begin{figure}
\centering
\includegraphics[width=0.3\textwidth]{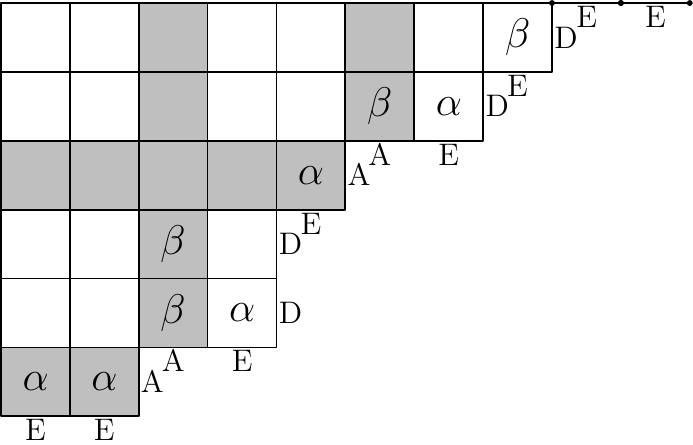}
\caption{The grey boxes indicate the boxes that belong to an A-row or A-column. Observe that any box above an A-row or left of an A-column is forced to be empty.}
\noindent
\label{fig_multicatalan_example}
\end{figure}



\begin{thm}
The number of two-species multi-Catalan tableaux corresponding to a two-species PASEP at $q=0$ of size $n$ and with $r$ particles of type 1 and $k$ particles of type 2 is 
\begin{displaymath}\sum\limits_
{\substack{k_1\leq n_1,\ldots,k_{r+1}\leq n_{r+1},\\
n_1+\cdots+n_{r+1}=n-r\\
k_1+\cdots+k_{r+1}=k}} \prod_{i=1}^{r+1} \mathcal{N}_{n_i+1,k_i+1}\end{displaymath}
where $\mathcal{N}_{n,k}=\frac{1}{n}{n \choose k}{n \choose k-1}$ is the $n,k-$ Narayana number.
\end{thm}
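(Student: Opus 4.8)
The plan is to refine the decomposition already used in the proof of Theorem~\ref{thm_total}, which reduces the count of size-$n$, $r$-particle tableaux to a convolution of Catalan numbers, by additionally tracking the number of type-2 particles (equivalently, the number of $\alpha$'s read off the diagonal). First I would recall that removing the $r$ A-rows and A-columns together with all boxes forced empty north of an A-row or west of an A-column leaves a disjoint union of $r+1$ multi-Catalan tableaux whose types contain no $\A$, with sizes $n_1,\ldots,n_{r+1}$ summing to $n-r$, and that this surgery is a bijection between fillings of the full tableau and tuples of fillings of the sub-tableaux. The essential new observation is that this surgery deletes only the $r$ diagonal boxes labeled $x$; every surviving diagonal box carries an $\alpha$ or a $\beta$, and the diagonal of each sub-tableau is a sub-word of the original type with the $\A$'s excised. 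Hence each type-2 particle lands in exactly one sub-tableau. Writing $k_i$ for the number of type-2 particles of the $i$th sub-tableau, we obtain $k_1+\cdots+k_{r+1}=k$ together with $k_i\le n_i$, which matches precisely the index set of the asserted sum.

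The key step is then a refinement lemma for the single-species case: the number of multi-Catalan tableaux of size $m$ whose type has no $\A$ and exactly $j$ occurrences of $\D$ equals $\mathcal{N}_{m+1,j+1}=\frac{1}{m+1}\binom{m+1}{j+1}\binom{m+1}{j}$. Since summing over $j$ must return the Catalan number $C_{m+1}$ from the proof of Theorem~\ref{thm_total}, and indeed $\sum_j \mathcal{N}_{m+1,j+1}=C_{m+1}$, the statistic ``number of $\D$'s'' is exactly the one that carries the Narayana refinement of the Catalan numbers. I would establish the lemma either by invoking the enumeration of the Catalan tableaux of \cite{mandelshtam} refined by the number of type-2 particles, or directly by a bijection between size-$m$, no-$\A$ tableaux and Dyck paths of semilength $m+1$ under which the $j$ diagonal $\alpha$'s are transported to the peak statistic counted by $\mathcal{N}_{m+1,j+1}$. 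The boundary cases $j=0$ and $j=m$, each producing a single (entirely empty) tableau, agree with $\mathcal{N}_{m+1,1}=\mathcal{N}_{m+1,m+1}=1$ and serve as a useful sanity check on the index shifts $m\mapsto m+1$ and $j\mapsto j+1$.

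With the lemma in hand the final assembly is immediate: the fillings of the $r+1$ sub-tableaux are mutually independent, so for a fixed composition $(n_1,k_1),\ldots,(n_{r+1},k_{r+1})$ the number of tableaux equals $\prod_{i=1}^{r+1}\mathcal{N}_{n_i+1,k_i+1}$, and summing over all admissible compositions with $\sum n_i=n-r$ and $\sum k_i=k$ gives the claimed formula. As a consistency check I would note that summing the result over all $k$ collapses each Narayana factor back to a Catalan number $C_{n_i+1}$ and recovers the convolution of $r+1$ Catalan numbers appearing in Theorem~\ref{thm_total}.

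I expect the main obstacle to be the refinement lemma, specifically identifying and justifying which feature of the underlying Dyck paths (or which recursive structure of the Catalan tableaux) is carried onto the number of type-2 particles, so that the Narayana distribution can be invoked cleanly; everything else is either the bijective surgery of Theorem~\ref{thm_total} reused verbatim or the bookkeeping of the convolution.
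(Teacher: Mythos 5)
Your proposal matches the paper's proof: the paper likewise refines the decomposition of Theorem~\ref{thm_total} by tracking the number of D-rows in each of the $r+1$ sub-tableaux obtained after removing the A-rows and A-columns, and then invokes the known fact that size-$n_i$ tableaux with no $\A$'s and $k_i$ $\D$'s are counted by $\mathcal{N}_{n_i+1,k_i+1}$. Your additional consistency checks (summing over $k$ to recover the Catalan convolution, and the boundary cases $j=0,m$) go slightly beyond the paper's terse argument but do not change the route.
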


\begin{proof}
We refine the proof of Theorem \ref{thm_total} by keeping track of the number of D rows. More precisely, after removing the A rows and columns and the boxes that lie respectively above and west of the A rows and columns, we are left with a disjointed list of $r+1$ smaller tableaux, the sum of whose sizes is $n-r$. We let these smaller tableaux (starting from top to bottom) have sizes $n_1,\ldots,n_{r+1}$ with $n_1+\cdots+n_{r+1}=n-r$. Furthermore, if we wish to have a total of $k$ D rows, we let the smaller tableaux have, respectively, $k_1,\ldots,k_{r+1}$ D rows with $k_1+\cdots+k_{r+1}=k$.

It is known from the literature that the number of multi-Catalan tableaux of size $n$ whose type has zero $\A$'s and $k$ $\D$'s is $\mathcal{N}_{n_i+1,k_i+1}$, from which the theorem follows. 
\end{proof}

For the next theorem, we make some more precise definitions to describe the structure of the multi-Catalan tableaux.
\begin{defn}\label{Lambdas}
We represent a word $X$ in $\{\D, \E, \A\}^n$ with exactly $r$ $\A$'s by a list of $r+1$ words in $\{\D, \E\}^{\ast}$, where each word of the list is the longest possible continuous sub-word of $X$ that does not contain an $\A$. We call this list of D-E sub-words $(X_1, \ldots, X_{r+1})$. We then represent that list by a list of partitions $\Lambda = (\Lambda_1, \ldots, \Lambda_{r+1})$, where the partition $\Lambda_{i}=\lambda(X_i)$ is the shape obtained from applying the definition of the partition $\lambda$ to the $i$th D-E word.
\end{defn}

As an example for the above, the tableau in Figure \ref{fig_multicatalan_example} has type $\E\E\D\E\D\E\A\E\D\D\E\A\E\E$, which can be rewritten as a list of three D-E words $(\E\E\D\E\D\E, \E\D\D\E, \E\E)$. Then the list of partitions is $\Lambda=((2,1), (1,1), (\emptyset))$.

For our final result in this section, we define the matrix $A_{\lambda}^{\alpha,\beta}=(A_{ij})_{1\leq i,j\leq k}$, where $\lambda$ is some partition $(\lambda_1,\ldots,\lambda_k)$, and
\begin{displaymath}
{\textstyle A_{ij} = \beta^{j-i}\alpha^{\lambda_i-\lambda_{j+1}} \left({\lambda_{j+1} \choose j-i}+\beta {\lambda_{j+1} \choose j-i+1}\right) +\beta^{j-i}\alpha^{\lambda_{i}-\lambda_{j}} \sum_{\ell=0}^{\lambda_j-\lambda_{j+1}-1} \alpha^{\ell} \left({\lambda_j-\ell-1 \choose j-i-1}+\beta {\lambda_j-\ell-1 \choose j-i} \right)}.
\end{displaymath}
From \cite{mandelshtam}, $\weight(X)=\det A_{\lambda(X)}^{\alpha,\beta}$ for $X$ a word in $\{\D, \E\}^{\ast}$ corresponding to state of the two-species PASEP with zero type 1 particles. 

\begin{thm}
Consider the two-species PASEP of size $n$ at $q=0$, and a state $X$ with exactly $r$ type 1 particles. Let $\Lambda = (\Lambda_1,\ldots,\Lambda_{r+1})$ be the list of partitions that corresponds to $X$ according to Definition \ref{Lambdas}. Let $\Lambda_i$ have $k_i$ rows and $m_i$ columns. Then:

\begin{displaymath}\pi(X) = \alpha^{n-m_1} \beta^{n-k_{r+1}} \det A_{\Lambda_1}(\alpha,1) \det A_{\Lambda_{r+1}}(1,\beta) \prod_{i=2}^r \det A_{\Lambda_i}(1,1)\end{displaymath}
is the unnormalized steady state probability of state $X$.
\end{thm}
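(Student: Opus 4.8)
The plan is to leverage the structural decomposition already established in the proof of Theorem~\ref{thm_total}, together with the determinantal formula $\weight(X)=\det A_{\lambda(X)}^{\alpha,\beta}$ for words with zero $\A$'s, and simply track the weight contributions carefully. The key observation from the proof of Theorem~\ref{thm_total} is that removing the $r$ A-rows and A-columns (along with all boxes north of an A-row and west of an A-column, all of which are forced empty) decomposes the tableau into $r+1$ independent sub-tableaux of types $X_1,\ldots,X_{r+1}$, whose fillings can be chosen independently. Since $\weight(X)$ is multiplicative over independent regions, this gives
\begin{displaymath}
\weight(X) = \left(\text{weight of the removed A-structure}\right)\cdot \prod_{i=1}^{r+1}\weight(X_i),
\end{displaymath}
and by Theorem~\ref{main_thm} the unnormalized probability $\pi(X)$ equals $\weight(X)=f(X)$.

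First I would pin down the weight of the ``A-structure'' — the boundary edges and forced entries associated with the $r$ A-rows and A-columns. Each A-row contributes an $\alpha$ in its rightmost relevant box (or the forced $\alpha$ pattern), and each A-column contributes a $\beta$; reading off the staircase picture in Figure~\ref{fig_multicatalan_example}, the edges above and to the left of the A-cross are empty and contribute nothing, while the forced $\alpha$'s and $\beta$'s accumulate according to the positions of the A's. This is exactly the bookkeeping that produces the prefactor $\alpha^{n-m_1}\beta^{n-k_{r+1}}$: the first sub-tableau $\Lambda_1$ sits at the top and its columns interact with every A below it, contributing extra $\alpha$ powers, while the last sub-tableau $\Lambda_{r+1}$ sits at the bottom and its rows interact with every A above, contributing extra $\beta$ powers. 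Concretely, I expect the $\alpha$'s forced into the A-rows to combine with the lattice-path D-edge weights so that $\Lambda_1$ is evaluated at $(\alpha,1)$ (full $\alpha$-weight, trivial $\beta$-weight) and $\Lambda_{r+1}$ at $(1,\beta)$, with all interior blocks $\Lambda_2,\ldots,\Lambda_r$ evaluated at $(1,1)$ because both their row and column boundary weights have been absorbed into the prefactor.

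Second, I would invoke the formula $\weight(X_i)=\det A_{\Lambda_i}^{\alpha,\beta}$ from \cite{mandelshtam} for each D-E sub-word, and then argue the correct specialization of the two arguments $(\alpha,\beta)$ for each block. The cleanest way to handle the specializations is to note that in the decomposition, the ``outer'' boundary edges of each interior block have already been stripped off into the A-structure, so the residual weight carried by $\Lambda_i$ for $2\le i\le r$ is the pure interior filling weight, which is $\det A_{\Lambda_i}(1,1)$; for $\Lambda_1$ only the column boundaries remain (giving the $\alpha$ dependence) and for $\Lambda_{r+1}$ only the row boundaries remain (giving the $\beta$ dependence). Assembling these pieces yields precisely the stated product.

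The main obstacle will be the careful accounting of the prefactor exponents $n-m_1$ and $n-k_{r+1}$ and verifying that no $\alpha$ or $\beta$ weight is double-counted or dropped when the boundary edges are reassigned between the A-structure and the individual blocks. In particular I would need to check that the $\alpha^{k}\beta^{n-k-r}$ lattice-path weight of the full condensed tableau factors correctly: the $\alpha$-exponents must split as (all of $\Lambda_1$'s D-edges) plus (the forced $\alpha$'s in the A-rows), summing to $n-m_1$, and symmetrically for $\beta$ and $n-k_{r+1}$. This is a finite but delicate combinatorial identity on the edge labels; once it is confirmed, the rest follows immediately from multiplicativity of weights and the single-block determinantal formula, so I would spend most of the effort making that exponent bookkeeping airtight, ideally by tracking a running example such as the one in Figure~\ref{fig_multicatalan_example}.
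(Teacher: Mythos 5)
The paper states this theorem without any proof, so there is nothing to compare against line by line; your proposed route --- reuse the block decomposition from the proof of Theorem~\ref{thm_total}, apply the one-block determinantal formula from \cite{mandelshtam} to each $\Lambda_i$, and track prefactors --- is certainly the intended one. However, there is a genuine gap at the one place where the content of the proof actually lives. You model the removed A-rows and A-columns as an ``A-structure'' whose weight is determined by the positions of the $\A$'s and which simply multiplies $\prod_i \weight(X_i)$. That is not what happens: the forced entries in the A-rows and A-columns depend on the fillings of the adjacent blocks. Concretely, by rule (iv) of Definition~\ref{condensed_def}, the DA box where a D-row of block $j$ meets the A-column immediately to its left receives a $\beta$ \emph{exactly when that row carries no $\beta$ inside block $j$'s own sub-tableau} (and dually for AE boxes and $\alpha$'s). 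Since a row of a Catalan tableau carries at most one $\beta$, the total number of $\beta$'s in each D-row of blocks $1,\dots,r$ is always exactly one, independent of the filling; likewise each E-column of blocks $2,\dots,r+1$ carries exactly one $\alpha$. It is precisely this cancellation that wipes out the $\beta$-dependence of blocks $1,\dots,r$ and the $\alpha$-dependence of blocks $2,\dots,r+1$, leaving the interior blocks contributing only their cardinality $\det A_{\Lambda_i}(1,1)$ and producing the pure powers of $\alpha$ and $\beta$ in the prefactor. Your stated mechanism (``the outer boundary edges of each interior block have been stripped off into the A-structure'') does not capture this, and a naive product of an A-structure weight with the full two-variable block weights $\det A_{\Lambda_i}(\alpha,\beta)$ would give the wrong answer.

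Two further points you would have to confront when making the bookkeeping ``airtight.'' First, $\det A_{\lambda}^{\alpha,\beta}$ as defined here is the generating function of the \emph{filling} alone (e.g.\ $\det A_{(1)}=\alpha+\beta$, whereas $\weight(\D\E)=\alpha\beta(\alpha+\beta)$), so the sentence ``$\weight(X)=\det A_{\lambda(X)}^{\alpha,\beta}$'' must be read with the prefactor $\alpha^{n-\lambda_1}\beta^{n-k}$ understood; you cannot cite it at face value and also get the exponents $n-m_1$, $n-k_{r+1}$ to come out. Second, carrying out the computation above gives exponents $n-r-e_1$ and $n-r-d_{r+1}$, where $e_1$ counts \emph{all} $\E$'s of $X_1$ (including leading ones, which contribute no column to the partition $\Lambda_1$) and $d_{r+1}$ counts all $\D$'s of $X_{r+1}$ (including trailing ones). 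This matches the stated formula only up to the global constant $(\alpha\beta)^r$ (harmless for an unnormalized probability) and only if $m_1$ and $k_{r+1}$ are interpreted as $e_1$ and $d_{r+1}$, i.e.\ as the dimensions of the bounding rectangle rather than of the partition with zero parts dropped. A test case such as $X=\D\E\A\D\E$, for which the tableaux sum directly to $\alpha^3\beta^3(\alpha+1)(1+\beta)$ against the formula's $\alpha^4\beta^4(\alpha+1)(1+\beta)$, makes these normalization issues visible and is worth running before trusting the exponents.
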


\section{A Markov chain on the multi-Catalan tableaux that projects to the two-species PASEP at $q=0$}\label{section_mc}

In this section we construct a Markov chain on the multi-Catalan tableaux that provides a second proof of Theorem \ref{main_thm} and generalizes the construction of Corteel and Williams from \cite{cw_mc}. We start by defining projection for Markov chains, from \cite[Definition 3.20]{cw_mc}. 

\begin{defn}
Let $M$ and $N$ be Markov chains on finite sets $X$ and $Y$, and let $F$ be a surjective map from $X$ to $Y$. We say that $M$ \emph{projects} to $N$ if the following properties hold:
\begin{itemize}
\item If $x_1, x_2 \in X$ with $Prob_M(x_1 \rightarrow x_2) > 0$, then $Prob_M(x_1 \rightarrow x_2) = Prob_N(F(x_1) \rightarrow F(x_2))$.
\item If $y_1$ and $y_2$ are in $Y$ and $Prob_N(y_1 \rightarrow y_2)>0$, then for each $x_1 \in X$ such that $F(x_1) = y_1$, there is a unique $x_2 \in X$ such that $F(x_2) = y_2$ and $Prob_M (x_1 \rightarrow x_2) > 0$; moreover, $Prob_M (x_1 \rightarrow x_2) = Prob_N (y_1 \rightarrow y_2)$.
\end{itemize}
\end{defn}

This means that: if $M$ projects to $N$ via the map $F$, then the steady state probability that $N$ is in state $y$ is equal to the sum of the steady state probabilities over all the states $x \in \{z \in X|F(z)=y\}$. In our case, $N$ is the two-species PASEP at $q=0$, and $M$ is the Markov chain on the multi-Catalan tableaux which we describe below. Corteel and Williams defined a Markov chain on permutation tableaux (in bijection with alternative tableaux) that projects to the PASEP. In the two-species PASEP at $q=0$, we have an analogous result using similar transitions.

\begin{wrapfigure}[11]{r}{0.4\textwidth}
\centering
\includegraphics[width=0.4\textwidth]{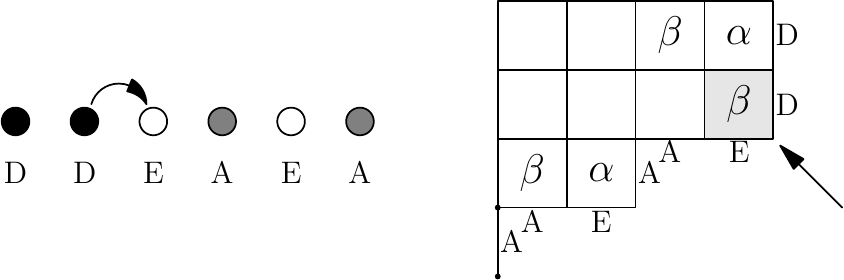}
\caption{The tableau corner associated to the transition $DDEAEA \rightarrow DEDAEA$.}
\noindent
\label{hopping_example}
\end{wrapfigure}

\begin{defn}
Define a \emph{corner} of the tableau to be a DE, DA, or AE box that is both the right-most box of some row and the bottom-most box of some column. Define a \emph{right leg} to be the set of E-edges of the lattice path $L(T)$ that lie on the north boundary of the $k+r \times n-k$ rectangle. In other words, $T$ has a right leg when $\type(T)$ begins with E. Analogously, define a \emph{left leg} to be the set of D-edges of the lattice path $L(T)$ that lie on the west boundary of the $k+r \times n-k$ rectangle. In other words, $T$ has a left leg when $\type(T)$ ends with a D. We call the \emph{transition points} the union of the set of corners along with the left leg and right leg (if those are present).
\end{defn}

We describe the process by examining the possible transitions out of some multi-Catalan tableau $T$ which corresponds to the two-species PASEP state $X$ for which $\type(T)=X$. Every transition is associated to some chosen transition point (namely, either a chosen corner or a right leg or a left leg).  In particular, the right leg corresponds to a transition $\E X' \rightarrow\D X'$ for $X=\E X'$, and the left leg corresponds to a transition $X'\D \rightarrow X'\E$ for $X=X'\D$. On the other hand, each corner of $T$ corresponds to a transition from $X$ on the PASEP that does not involve particles entering or exiting at the boundary. Specifically, a transition on state $X$ at PASEP location $i$ corresponds to a transition at the corner box in $T$ that has its east edge precisely the $i$th edge of the lattice path $L(T)$ (from top to bottom). Figure \ref{hopping_example} shows a transition location in a two-species PASEP word $X$ along with the corresponding corner of some multi-Catalan tableau of type $X$. Moreover, a DE corner of $T$ corresponds to a transition $\D\E \rightarrow \E\D$, a DA corner corresponds to a transition $\D\A \rightarrow \A\D$, and an AE corner corresponds to a transition $\A\E \rightarrow \E\A$ out of state $X$. (We note here that AA boxes are not included in the set of corners since there is no PASEP transition they correspond to.)

To obtain a transition at a chosen corner, we first strip off the labels on the boundary, then perform certain column or row removal and re-insertion, and finally reapply new labels. We describe the column/row procedure below for the two possible cases for the Greek symbol that corner box could contain.

\begin{figure}[!ht]
\centering
\includegraphics[width=\textwidth]{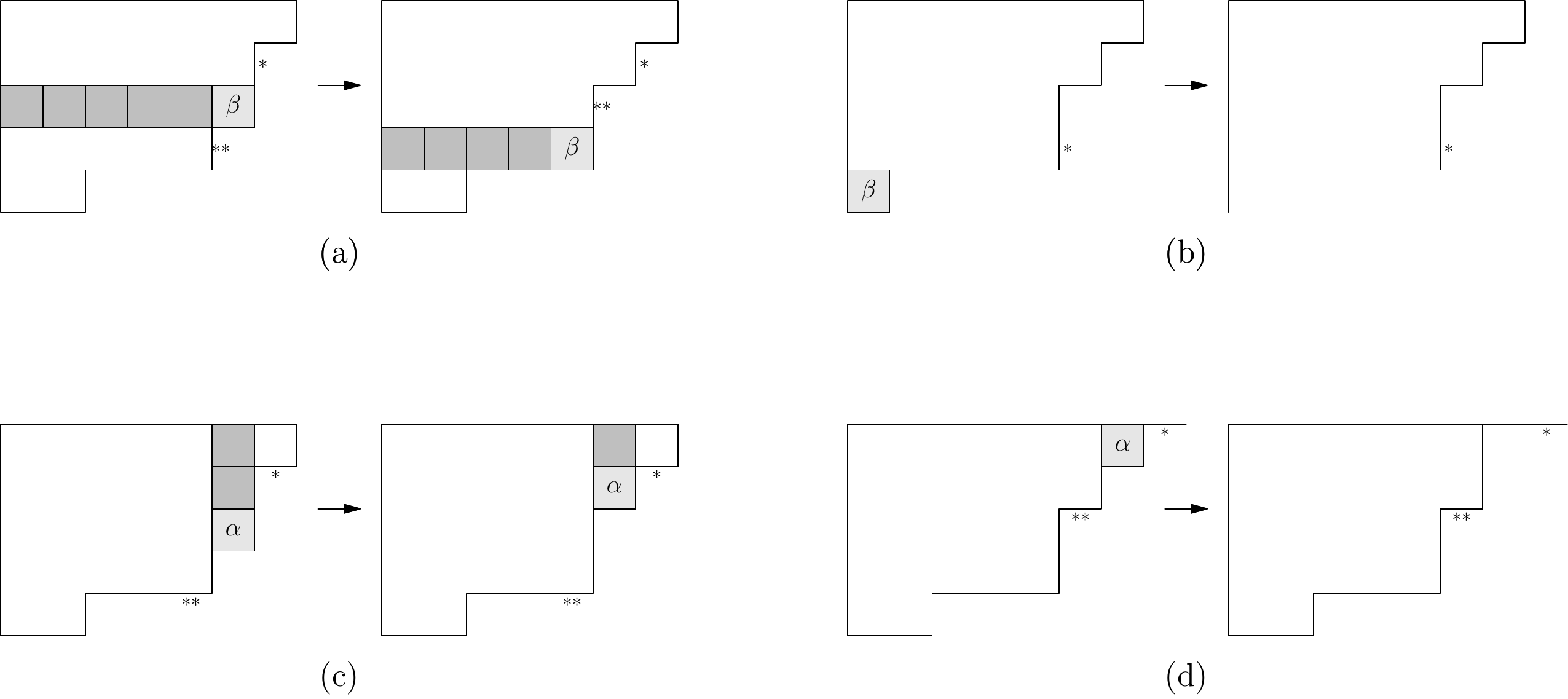}
\caption{The row removal and re-insertion procedure for (a) and (b) the chosen corner containing a $\beta$, and (c) and (d) the chosen corner containing an $\alpha$.}
\noindent
\label{mc_transitions}
\end{figure}
 
\noindent \textbf{The corner contains a $\boldsymbol{\beta}$.} Remove the row containing the corner (which is a horizontal stack of empty boxes with a $\beta$ on the right), cut off one of the empty boxes, and insert the row (with the $\beta$ still at the right of it) in the bottom-most location possible so that the resulting shape is still a Young shape. Figure \ref{mc_transitions} (a) shows an example. If the row originally had a single box, cutting off a box means it becomes an empty row, and so it should be placed at the south end of the shape with the rest of the empty rows. Figure \ref{mc_transitions} (b) shows an example of this case.

\noindent \textbf{The corner contains an $\boldsymbol{\alpha}$.} Remove the column containing the corner (which is a stack of empty boxes above an $\alpha$), cut off one of the empty boxes, and insert the column (with the $\alpha$ still at the bottom of it) in the right-most location possible so that the resulting shape is still a Young shape. Figure \ref{mc_transitions} (c) shows an example. If the column originally had a single box, cutting off a box means it becomes an empty column, and so it should be placed at the east end of the shape with the rest of the empty columns. Figure \ref{mc_transitions} (d) shows an example of this case.

Now we put the labels back on the edges of the boundary after exchanging the relevant two letters in the labelling word. For example, if the original state was $X\D\E Y$ for some words $X$ and $Y$, and a type 2 particle hopped to get the state $X\E\D Y$, then the labels on the boundary change from $X\D\E  Y$ to $X\E\D Y$. 

The following lemmas verify that the above actions are well-defined.

\begin{lemma}\label{beta_lemma}
Let $X$ be a word in $\{\D, \E, \A\}^{\ast}$, and let $T$ be a multi-Catalan tableau with $\type(T)=X$. A transition as defined above \textbf{at a corner that contains a $\boldsymbol{\beta}$} 
results in a valid multi-Catalan tableau.
\end{lemma}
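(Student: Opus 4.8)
The plan is to check the three conditions that ``valid'' means in Definition \ref{condensed_def}: that the output is a Young diagram in the same $k+r \times n-k$ rectangle with exactly $r$ inner corners carrying $\A$'s, that relabelling its boundary path reproduces the target word, and that the inherited filling obeys rules (i)--(v). Since a $\beta$-corner is, by rules (iii)--(v), either a DE box or a DA box, the associated PASEP move is $X\D\E Y \to X\E\D Y$ or $X\D\A Y \to X\A\D Y$; in both cases a single $\D$ is pushed one step to the right past the corner. First I would record the effect of this move on the shape $\lambda(T)$: writing the moved $\D$ as a south edge, the part it caps loses exactly the $\E$ or $\A$ that formerly followed it, so that part drops by one box while every other part is unchanged. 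Thus the shape operation is precisely ``delete that part, decrement it, and reinsert it in weakly decreasing order,'' which is exactly what ``remove the $\beta$-row, cut one empty box, and reinsert at the bottom-most Young-legal position'' accomplishes. This already shows the output is a Young diagram in the same rectangle, since the new row has length $\le \lambda_1 \le n-k$ and the number of rows is unchanged.

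Next I would check the labels. The move only permutes letters, so the multiset of $\D$'s, $\E$'s and $\A$'s, and hence the parameters $(n,k,r)$, is preserved; it therefore suffices to confirm that the $r$ inner corners carrying $\A$'s survive. In the DE case the $\A$'s are untouched, and in the DA case the moved $\A$ must be seen to still sit at a concave corner of the new path, which follows because its partner edges travel with the rows that merely shift up. The one genuinely local point is the reinserted row: because it is placed below every row of length at least its own, its rightmost box is the bottom-most box of its column, and its right edge carries the moved $\D$. Hence that box is a corner lying in a D-row, so it is a DE or DA box and is an admissible home for the $\beta$ it carries (rules (iii),(iv)). The degenerate case where the $\beta$-row is a single box is separate: cutting that box empties the row, the moved $\D$ becomes the left leg, and a bottommost empty $\D$-row is trivially valid.

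The main work, and the step I expect to be the real obstacle, is verifying rules (i)--(v) for the inherited filling. Every entry outside the moved row is copied verbatim, but the move changes the shape, and so the D-row/A-row and E-column/A-column types, precisely for the columns in the band between the old corner column and the new one, together with the rows that shift up by one; the argument must stay entirely within this band. I would check that rows other than the two involved keep their right-edge labels, so their row-type is unchanged, and then track, column by column in the band, how the bottom-most edge (and thus the E/A type) moves, confirming that every inherited $\alpha$ and $\beta$ still lies in a box of the type its rule demands and that no box forced empty acquires a symbol. The delicate sub-case is when the target word has two consecutive $\D$'s, so an equal-length $\D$-row sits just below the reinserted row: here it is essential that the bottom-most rule places the carried $\beta$ \emph{below} the inherited entry of that neighbouring row, so that any $\alpha$ there ends up above the $\beta$; one then checks that rule (ii), that everything above an $\alpha$ is empty, is inherited from the original tableau, and that the $\beta$, now the column bottom, occupies a legitimate DE box. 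Establishing this compatibility across every sub-case of the band analysis is where the bulk of the argument lies, the remaining rules being immediate because the part of $T$ below and to the left of the band is carried over unchanged.
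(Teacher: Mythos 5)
Your overall skeleton matches the paper's: split into the DE and DA cases, observe that the shape operation is ``delete the part, decrement, reinsert in order,'' handle the single-box degeneration separately, and then verify that the carried $\beta$ lands in an admissible box. But the one step that actually requires an argument is precisely the one you dispatch in a single clause: ``its right edge carries the moved $\D$, hence that box is a corner lying in a D-row.'' This is not immediate, and as literally stated it is not even quite true: when $T$ already has rows of length $\lambda_i-1$ below the chosen corner, the bottom-most insertion rule places the new row \emph{below} all of them, so after relabelling the moved $\D$ labels the topmost of the equal-length rows, not the reinserted one. The conclusion (the reinserted row is a D-row rather than an A-row) does hold, but it needs the structural fact that an $\A$-labelled south edge must be immediately preceded by an $\A$-labelled west edge of an inner corner; one then checks that the edge preceding the reinserted row's south edge is either another south edge (equal-length row above) or the relocated E-labelled west edge (DE case), or is excluded because rows $i$ and $i+1$ of $T'$ have equal length (DA case, where $\lambda_{i+1}=\lambda_i-1$ is forced). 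If the reinserted row could be an A-row, its corner would be an AE or AA box and the $\beta$ would violate rule (v); ruling this out is the content of the lemma, and your proposal asserts it rather than proves it.

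Secondly, you locate ``the main work'' in a band-by-band re-verification of rules (i)--(v) for the inherited filling, which you then leave entirely unexecuted, including a garbled sub-case in which an equal-length D-row supposedly sits ``just below'' the reinserted row (the bottom-most rule guarantees no such row exists below it). The paper avoids this analysis altogether by observing that every row other than the moved one keeps its length, its right-edge label, and its filling, and every column keeps its bottom-edge label, so all box types and all forced-empty relations outside the new row are unchanged; the new row itself is empty except for the corner $\beta$, which imposes no constraint on other boxes. So the difficulty is the opposite of where you place it: the global compatibility is essentially free, and the local admissibility of the $\beta$ is the real obstacle.
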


\begin{proof}
Let $Y$ be the Young diagram and $\lambda=(\lambda_1,\ldots,\lambda_{r+k})$ be the partition associated to $T$ with $r$ and $k$ the number of $\A$'s and $\D$'s respectively in $\type(T)$. Let $L$ be the lattice path associated to $T$. The edges of $L$ are labeled with $\D$'s, $\E$'s, and $\A$'s according to the labelling word $X$. Let $X'$ and $X''$ denote arbitrary words in $\{\D, \E, \A\}^{\ast}$.

\noindent \emph{Transition at a DE corner.}
Suppose the chosen corner is a DE corner of tableau $T$ in row $i$ of length $\lambda_i$, so we write $X=X'\D\E  X''$. The fact that this box is a corner implies that for any $j>i$, $\lambda_{j}<\lambda_i$. Thus after removing row $i$ of $T$ and reinserting a row of length $\lambda_i-1$ into the lowest position possible, we obtain a tableau $T'$ of shape $(\lambda_1,\ldots,\lambda_{i-1},\lambda_i - 1, \lambda_{i+1},\ldots,\lambda_{r+k})$, which is in fact the shape $\lambda$ with the single box removed in row $i$. In other words, the pair of south and west edges of $L(T)$ that correspond to the DE corner of row $i$ change places, along with their labels of D and E. The lattice path $L(T')$ associated to $T'$ has labelling word $X'\E\D X''$. 

First, if $\lambda_i=1$, the transition is completed by simply removing row $i$ and replacing it with a single south edge on the west boundary of $T$, and replacing the labels with the labelling word $X'\E\D X''$ to obtain the new tableau $T'$. In this case, the weight of the boundary of $T'$ is the same as for $T$, but the filling of $T'$ has lost one $\beta$, so $wt(T') = \frac{1}{\beta}\wt(T)$. We observe that if $\lambda_i=1$, then $X$ must necessarily have the form $X'\D\E\D^k$.

It remains to check that inserting a row of length $\lambda_i-1$ for $\lambda_i>1$ with a $\beta$ in its right-most box into the lowest position possible results in a valid filling of $T'$. Suppose the row was inserted right above row $j$. Then necessarily $\lambda_j<\lambda_i-1$, and so the right-most box of the inserted row does not lie above any other box. Thus it is valid to place a $\beta$ in it as long as this $\beta$ is not in an A-row.

Here we make the following observation: each $\A$ in $X$ corresponds to the consecutive pair of a west edge followed by a south edge in the path $L(T)$. Thus if some row $s$ with length $\lambda_s$ is an A-row, then necessarily $\lambda_{s-1} \geq \lambda_s+1$. In other words, if $\lambda_{s-1} = \lambda_s + 1$, then the row $s-1$ necessarily ends with a corner box that is in an A-column.

Therefore, the only way for the newly inserted row (which has length $\lambda_i-1$) to end up being an A-row is if the row above it has length $\lambda_i$ and ends with a corner box that is in an A-column. This implies that the chosen corner of $T$ was in that same A-column. That is not possible since we started with the condition that that corner box is a DE box. Therefore, we can safely place a $\beta$ in the right-most box of the newly inserted row, and so this new row does not interfere with the rest of the filling of the tableau. We have thus a valid filling of a multi-Catalan tableau $T'$ of type $X'\E\D X''$. 

From the above, if $\lambda_i>1$, neither the weight of the filling or the weight of the boundary of the tableau changed after the transition and so $wt(T') =\wt(T)$.

\noindent \emph{Transition at a DA corner.}
Suppose the chosen corner is a DA corner of tableau $T$ in row $i$ of length $\lambda_i$, so we write $X=X'\D\A  X''$. Recall that a west edge in the lattice path $L(T)$ that is labelled by A  is necessarily followed by a south edge that is also labelled by A. Thus $\lambda_{i+1}=\lambda_i-1$. 

After removing row $i$ of $T$ and reinserting a row of length $\lambda_i-1$ into the lowest position possible, we obtain a tableau $T'$ of shape $(\lambda_1,\ldots,\lambda_{i-1},\lambda_i - 1,\lambda_i-1, \lambda_{i+2},\ldots,\lambda_{r+k})$, which is in fact the shape $\lambda$ with the single box removed in row $i$. But now, if apply the labelling word $X'\A\D X''$ to $T'$, we are in fact changing three of the edges in $L(T)$. Specifically, the pair of a south edge and a west edge of $L(T)$ that correspond to the DA corner of row $i$ change places, and become a consecutive pair of a west edge and a south edge which we can both label with A in the new lattice path $L(T')$, as in Figure \ref{DA}. Since we now have both rows $i$ and $i+1$ of $Y'$ of length $\lambda_i-1$, the south edge at the boundary of row $i+1$ of $T'$ must necessarily have a D label. Consequently we have described how we obtained the tableau $T'$ and its associated lattice path $L(T')$ whose labelling word is $X'\A\D X''$. 

Now, exactly as in the DE case, if $\lambda_i=1$, the transition is completed by simply removing row $i$ and replacing it with a single south edge on the west boundary of $T$, and replacing the labels with the labelling word $X'\A\D X''$ to obtain the new tableau $T'$. In this case, the weight of the boundary of $T'$ is the same as for $T$, but the filling of $T'$ has lost one $\beta$, so $wt(T') = \frac{1}{\beta}\wt(T)$. We observe that if $\lambda_i=1$, then $X$ must necessarily have the form $X'\D\A\D^k$.

\begin{figure}[h]
\centering
\includegraphics[width=\textwidth]{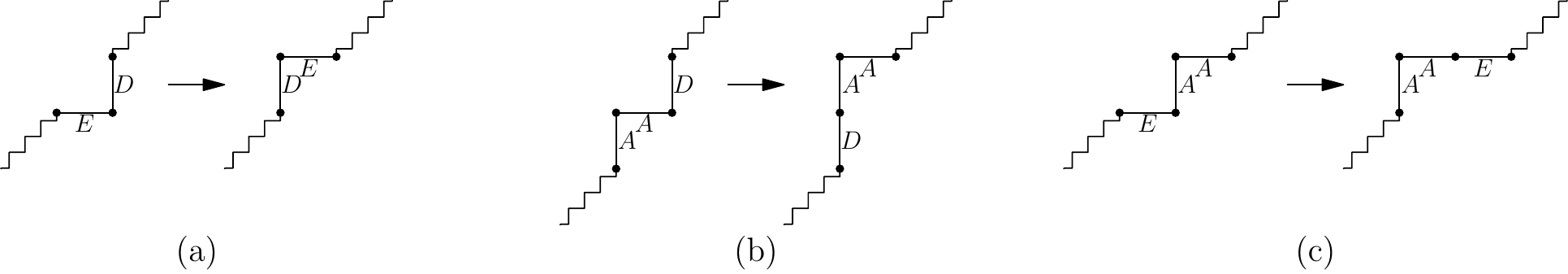}
\caption{The transition on a multi-Catalan tableau that corresponds to the PASEP transition (a) DE$\rightarrow$ED, (b) DA$\rightarrow$AD and (c) AE$\rightarrow$EA.}
\noindent
\label{DA}
\end{figure}

It remains to check that inserting a row of length $\lambda_i-1$ for $\lambda_i>1$ with a $\beta$ in its right-most box results in a valid filling of $T'$. Suppose the row was inserted right above row $j$. Then necessarily $\lambda_j<\lambda_i-1$, and so the right-most box of the inserted row does not lie above any other box. Thus it is valid to place a $\beta$ in it as long as this $\beta$ is not in an A-row. However, now we use the fact that $T'$ has both rows $i$ and $i+1$ of length $\lambda_i-1$. If the newly inserted row of length $\lambda_i-1$ were to end up an A-row, that would imply the row directly above it has length one box more, which contradicts both rows $i$ and $i+1$ having the same length $\lambda_i-1$. Therefore, the newly inserted row is necessarily a D-row. Thus we can safely place a $\beta$ in its right-most box, and so this new row does not interfere with the rest of the filling of the tableau. We have thus a valid filling of a multi-Catalan tableau $T'$ of type $X'\A\D X''$. 

From the above, if $\lambda_i>1$, neither the weight of the filling or the weight of the boundary of the tableau changed after the transition and so $wt(T') =\wt(T)$.
\end{proof}

\begin{lemma}
Let $X$ be a word in $\{\D, \E, \A\}^{\ast}$, and let $T$ be a multi-Catalan tableau with $\type(T)=X$. A transition as defined above \textbf{at a corner that contains an $\boldsymbol{\alpha}$} results in a valid multi-Catalan tableau.
\end{lemma}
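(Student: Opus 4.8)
The plan is to mirror the structure of the preceding Lemma~\ref{beta_lemma}, which handled the case of a corner containing a $\beta$, and dualize the argument by interchanging the roles of rows and columns (equivalently, the roles of $\D$ and $\E$, of $\alpha$ and $\beta$, and of the ``D-row / A-row'' and ``E-column / A-column'' notions). The $\alpha$-corner case splits exactly as the $\beta$-case does: a corner containing an $\alpha$ is either a DE corner (where the filling is a vertical stack of empty boxes above an $\alpha$ in its bottom-most box) or an AE corner (necessarily containing an $\alpha$, by rule (v) of Definition~\ref{condensed_def}). The procedure removes the column of that corner, cuts off one empty box to shorten it, and re-inserts a column with the $\alpha$ still at its bottom into the right-most position that keeps the shape a Young diagram. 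I would organize the proof as two cases, \emph{transition at a DE corner} and \emph{transition at an AE corner}, each written as the column-transpose of the corresponding passage in the $\beta$-lemma.

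For the \textbf{DE corner} case, suppose the corner lies in column $j$ of height $\mu_j$ (measuring column lengths of $\lambda$), so we may write $X = X'\D\E X''$. Being a corner forces $\mu_{j'} < \mu_j$ for columns $j'$ to the right, so removing column $j$ and re-inserting a column of height $\mu_j - 1$ into the right-most admissible position yields a valid Young shape; on the lattice path $L(T)$ this simply transposes the consecutive south-edge/west-edge pair of the corner, changing the labelling word to $X'\E\D X''$. When $\mu_j = 1$ the column degenerates to a single east-boundary edge, and one checks (as in the $\beta$-case) that $\wt(T') = \tfrac{1}{\alpha}\wt(T)$. For $\mu_j > 1$, the content is that the re-inserted column, topped by an $\alpha$ in its bottom-most box, does not land in an A-column: by the observation that an A-column of height $\mu_s$ forces its right-neighbor column to have height $\ge \mu_s + 1$ (the west/south A-edge pairing), the only way the new column of height $\mu_j - 1$ could be an A-column is if the column immediately to its right had height $\mu_j$ ending in a row-boundary A-edge, which would force the original chosen corner into that same A-row, contradicting its being a DE box. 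Hence placing an $\alpha$ in the bottom box is legitimate and $\wt(T') = \wt(T)$.

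For the \textbf{AE corner} case, write $X = X'\A\E X''$; here the west-edge/south-edge A-pair preceding the corner forces $\mu_{j-1} = \mu_j - 1$ (the column-transpose of the $\mu_{i+1} = \mu_i - 1$ relation in the DA case). Removing column $j$ and re-inserting a height-$(\mu_j-1)$ column produces two adjacent columns of equal height $\mu_j - 1$, and relabelling with $X'\E\A X''$ transposes three edges of $L(T)$: the south/west pair of the corner swaps to a west/south A-pair, and the forced east boundary of the new neighboring column acquires the $\E$ label, exactly as in the DA figure of Figure~\ref{DA} transposed. The crucial point is again that the re-inserted column cannot become an A-column: if it did, its right neighbor would have to be strictly taller, contradicting the two equal-height columns just produced; thus the new column is an E-column and an $\alpha$ may be safely placed. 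As before, $\mu_j = 1$ gives $\wt(T') = \tfrac{1}{\alpha}\wt(T)$ and $\mu_j > 1$ gives $\wt(T') = \wt(T)$.

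\textbf{The main obstacle} I anticipate is not the shape manipulation—that is a clean transpose of the $\beta$-lemma—but verifying carefully that the re-inserted column never violates rule (ii) of Definition~\ref{condensed_def} (no box above an $\alpha$) against the \emph{other} filled boxes of $T'$, and in particular confirming the A-column exclusion argument goes through verbatim under transposition. One must check that the pairing characterization of A-edges, which in the $\beta$-case was stated for A-\emph{rows} via $\lambda_{s-1} \ge \lambda_s + 1$, dualizes correctly to A-\emph{columns}, and that the degenerate $\mu_j = 1$ sub-case interacts properly with any A-columns already present at the east end. These are exactly the points where an uncritical ``by symmetry'' could hide a genuine asymmetry between the $\alpha$ and $\beta$ roles, so I would make the column-version of the A-edge observation explicit rather than merely invoking duality.
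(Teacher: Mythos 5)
Your proposal is correct and takes essentially the same approach as the paper: the paper's own proof simply invokes the symmetry of the tableau rules, deducing the $\alpha$-corner case from Lemma~\ref{beta_lemma} by transposing the tableau and exchanging the roles of $\alpha$ and $\beta$, and records the same weight bookkeeping ($\wt(T')=\wt(T)$ for $\mu_i>1$, $\wt(T')=\tfrac{1}{\alpha}\wt(T)$ for $\mu_i=1$) that you obtain. Your write-up merely carries out that transposition explicitly, including the column version of the A-edge observation, which matches the paper's intent.
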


\begin{proof}
By the symmetry of the rules for the multi-Catalan tableaux, the proof is exactly the same as the one for Lemma \ref{beta_lemma}, except if we take the transpose of the tableau and exchange the roles of $\alpha$ and $\beta$. As before, see Figure \ref{DA} for the transition $L(T) \rightarrow L(T')$. It will be useful further on that if the transition from $T$ to $T'$ occurs at a corner that belongs to a column of length $\mu_i>1$, then as before, $\wt(T')=\wt(T)$. Otherwise, if $\mu_i=1$, then $\wt(T') = \frac{1}{\alpha}\wt(T)$. In that case, if the transition occurred at a DE corner, $\type(T)$ necessarily has the form E$^{\ell}\D\E  X$, and if the transition occurred at an AE corner, $\type(T)$ necessarily has the form E$^{\ell}\A\E  X$ for some word $X$ in $\{\D, \E, \A\}^{\ast}$.
\end{proof}

{\bf Transitions at the boundary.} For an arbitrary PASEP word $X$ in $\{\D, \E, \A\}^{\ast}$, we describe the transition that corresponds to the PASEP transition $\E X \rightarrow \D X$ from  a tableau $T$ of type $\E X$ to a tableau $T'$ of type $\D X$. $T$ must necessarily have at least one empty E-column on its right, so after stripping off the labels of the tableau, we remove the right-most empty column and instead insert a row with a $\beta$ in its right-most box, of maximal possible length such that the semi-perimeter stays fixed, but at the lowest position possible for that length. (If it is not possible to insert row of nonzero length, we simply insert a row of length zero and do not add a $\beta$ to the filling.) Finally, we apply the labeling word $\D X$ to the edges of $L(T')$. Figure \ref{EX} (a) shows an example.

\begin{figure}[h]
\centering
\includegraphics[width=0.7\textwidth]{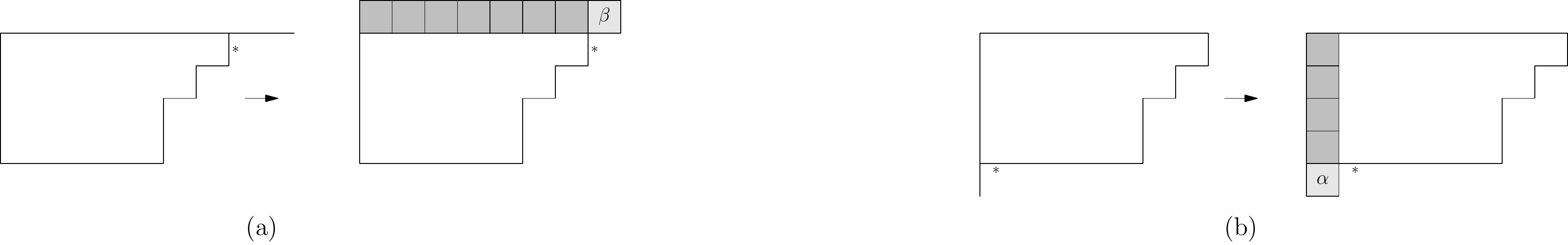}
\caption{The transition on a multi-Catalan tableau that corresponds to the PASEP transition (a) $\E X\rightarrow\D X$ and (b) $X\D \rightarrow X\D$.}
\noindent
\label{EX}
\end{figure}

Let such a transition occur with rate $\alpha$.

If a new row of nonzero length was inserted into $T$, then effectively one E-labeled boundary edge of $T$ was replaced with a D-labeled boundary edge for $T'$, plus the filling of $T'$ gained one $\beta$. Then $\wt(T') = \alpha \wt(T)$. Otherwise, if the new row had length zero, we simply replaced one E-labeled boundary edge with a D-labeled boundary edge, so $\wt(T') = \frac{\alpha}{\beta} \wt(T)$. In this last case, $T$ necessarily has type $\E\D^k$. 

The transition $X\D \rightarrow X\E$ is symmetric to the above, except that we need to take the transpose of the tableau and exchange the roles of $\alpha$ and $\beta$. For a transition from $T$ of type $X\D$ to $T'$ of type $X\E$, $T$ must necessarily have at least one empty D-row at the bottom of it. After stripping off the labels of the tableau, we remove the bottom-most empty row and instead insert a column with an $\alpha$ in its bottom-most box, of maximal possible length such that the semi-perimeter stays fixed, at the right-most position possible for that length. (If it is not possible to insert a column of nonzero length, we simply insert a column of length zero and do not add an $\alpha$ to the filling.) Finally, we apply the labeling word $X\E$ to the edges of $L(T')$. Figure \ref{EX} (b) shows an example.

Let such a transition occur with rate $\beta$.  

Similarly to the above, if the new column added has nonzero length, we obtain that $\wt(T')=\beta \wt(T)$. Otherwise, if the new column has length zero, $\wt(T') = \frac{\beta}{\alpha} \wt(T)$, and in this last case $T$ necessarily has type $\E^{\ell}\D$.

The following lemmas prove that these boundary transitions are well-defined.

\begin{lemma}\label{EX_lemma}
Let $X$ be a word in $\{\D, \E, \A\}^{\ast}$, and let $T$ be a multi-Catalan tableau with $\type(T)=\E X$. A transition on the boundary of $T$ as defined above, corresponding to the PASEP transition $\E X \rightarrow\D X$ results in a valid multi-Catalan tableau.
\end{lemma}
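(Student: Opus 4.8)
The plan is to follow the template of Lemma~\ref{beta_lemma}, replacing the corner surgery by the boundary surgery, and to check in turn that the new shape is a valid Young diagram, that the freshly created $\beta$ occupies a box where it is allowed, and that the type and weight come out as claimed. Write $\type(T)=\E X$. Because the word begins with $\E$, the path $L(T)$ opens with one or more west edges on the north boundary of the $(k+r)\times(n-k)$ rectangle, so $T$ has a right leg and at least one empty rightmost column to delete. I would first dispose of the degenerate case: if no row of positive length can be inserted after the deletion, then the width has dropped to $0$, forcing $\type(T)=\E\D^k$; here the move merely trades one boundary $\E$-edge for a boundary $\D$-edge, leaves the empty filling untouched, produces the valid shape of type $\D^{k+1}$, and gives $\wt(T')=\tfrac{\alpha}{\beta}\wt(T)$. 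So assume from now on that a row of positive length is inserted.

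Next I would identify the shape. Deleting the empty rightmost column lowers the semiperimeter by one, and inserting a single row restores it for any choice of row length; hence ``maximal length'' forces the inserted row to have the full width $n-k-1$. A full-width row must sit among the top rows of a Young diagram, and the instruction to place it as low as possible drops it directly beneath the block of rows that are already full width. I would check that the outcome is exactly $\lambda(\D X)$: a row has length $n-k-1$ precisely when its right edge is a $\D$ with no $\E$ or $\A$ before it, and turning the initial $\E$ of $\E X$ into a $\D$ adds exactly one such row to the top block of full-width rows, inserted just below the rest. Equivalently, in terms of the path, the first west edge is replaced by the south edge of the new leading $\D$ while the remaining edges (those of $X$) are left intact, which is visibly a valid shape inside the $(k+r+1)\times(n-k-1)$ rectangle.

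The main obstacle is the filling: showing that a $\beta$ may legally be placed in the rightmost box of the inserted row. The key observation is that every full-width row is a $\D$-row, since its right edge, being a leading $\D$, is a south edge with no west edge immediately before it in $L(\D X)$ and so is not the south half of an $\A$ inner corner. In particular the inserted row is a $\D$-row and not an $\A$-row, so its rightmost box is a $\D\E$ or $\D\A$ box, where a $\beta$ is permitted by rule (iii) and required by rule (iv) respectively. This is precisely the $\A$-row-avoidance that drives the corner case of Lemma~\ref{beta_lemma}. It remains to see that the $\beta$ disturbs nothing: the boxes to its left are empty because it is the rightmost box of a newly inserted row, and because the inserted row is the lowest full-width row, the strictly shorter rows beneath it do not reach its rightmost column; hence that box is the bottom of its column, no $\alpha$ lies beneath the $\beta$, and the filling of the full-width rows above is carried over unchanged.

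Finally I would record the bookkeeping. Relabelling the boundary by $\D X$ gives $\type(T')=\D X$, the boundary exchange of an $\E$-edge for a $\D$-edge contributes a factor $\alpha/\beta$, and the new $\beta$ contributes a factor $\beta$, so $\wt(T')=\alpha\,\wt(T)$, as claimed. Together with the shape and filling verifications this shows $T'$ is a valid multi-Catalan tableau of type $\D X$, completing the proof. The one genuinely delicate point, exactly as in Lemma~\ref{beta_lemma}, is the $\A$-row-avoidance argument of the previous paragraph; the rest is routine boundary bookkeeping.
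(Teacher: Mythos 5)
Your proof is correct and follows essentially the same route as the paper's: identify the new shape as $\lambda(\D X)$ with the full-width row slotted beneath the existing block of equal-length rows, rule out the inserted row being an A-row via the inner-corner structure of A-labels, and then do the weight bookkeeping. If anything, your treatment of the shape (distinguishing the case where the inserted row is strictly longer than every old row) and of the degenerate type $\E\D^k$ is slightly more explicit than the paper's, but the key A-row-avoidance argument is the same one.
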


\begin{proof}
Let the shape of $T$ with associated lattice path $L(T)$ be $\lambda=(\lambda_1,\ldots,\lambda_s)$ for some $s>0$. First we look at the case $\lambda_1>0$. Suppose $\lambda_1=\ldots=\lambda_i>\lambda_{i+1}$ for some $i$ (by convention, we identify the partition $(\lambda_1,\ldots,\lambda_s)$ with the partitions of form $(\lambda_1,\ldots,\lambda_s,0,\ldots,0)$).  Then the shape of $T'$ must be $\lambda' = (\lambda'_1,\ldots,\lambda'_{s+1})$ where $\lambda'_1=\ldots=\lambda'_{i+1}=\lambda_1$ and $\lambda'_j=\lambda_{j-1}$ for $j>i+1$. It is easy to check that labeling $L(T')$ with the labeling word $\D X$ is consistent with the definition of a multi-Catalan tableau. 

It remains to check that placing a $\beta$ in row $i+1$ of $T'$ results in a valid filling. The only possible problem we could encounter would be if the right-most box of row $i+1$ of $T'$ were an AE box that does not admit a $\beta$. However, that would mean row $i+1$ of $T'$ is an A-row, which implies row $i$ of $T$ is an A-row. Since the A's can only label the boundary edges belonging to inner corners of a tableau, that would imply row $i-1$ of $T$ has greater length, which contradicts our assumption that $\lambda_i=\lambda_1$ is the length of the longest row of $T$. Thus it would never be the case that the right-most box of row $i+1$ of $T$ is an AE box, and so placing a $\beta$ there indeed results in a valid filling.
\end{proof}

\begin{lemma}
Let $X$ be a word in $\{\D, \E, \A\}^{\ast}$, and let $T$ be a multi-Catalan tableau with $\type(T)=X\D$. A transition on the boundary of $T$ as defined above, corresponding to the PASEP transition $X\D \rightarrow X\E$ results in a valid multi-Catalan tableau.
\end{lemma}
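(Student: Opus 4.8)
The plan is to avoid repeating the edge-by-edge case analysis of Lemma \ref{EX_lemma} and instead derive this lemma from it using the transpose symmetry of the multi-Catalan tableaux, which the $X\D \rightarrow X\E$ transition was explicitly designed to respect. The guiding principle, already flagged in the text, is that conjugating the diagram and swapping $\alpha$ with $\beta$ turns the $X\D \rightarrow X\E$ transition into the $\E X' \rightarrow \D X'$ transition.

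First I would make precise the \emph{transpose--swap involution} $T \mapsto T^{t}$: reflect the condensed diagram $Y(T)$ across its main (northwest--southeast) diagonal, exchange south and west edges together with their labels, and simultaneously swap every $\alpha$ with a $\beta$ in the filling. I would verify that this is an involution on the set of \emph{valid} multi-Catalan tableaux by inspecting the five rules of Definition \ref{condensed_def}: reflection turns rows into columns, so D-rows and E-columns are interchanged while A-rows and A-columns are interchanged. Under this, rule (i) and rule (ii) are exchanged, rule (iii) is self-dual (a DE box maps to a DE box and the admissible set $\{\alpha,\beta\}$ is stable under the swap), and rule (iv) and rule (v) are exchanged (a DA box maps to an AE box while $\beta \mapsto \alpha$). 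Hence $T^{t}$ is again valid. Tracking the boundary word shows that if $\type(T)=W$ then $\type(T^{t})=\phi(W)$, where $\phi$ reverses $W$ and swaps every $\D$ with an $\E$ while fixing each $\A$; in particular $\phi(X\D)=\E X'$ and $\phi(\D X')=X\E$, with $X'=\phi(X)$. Next I would check that the insertion procedure defining the $X\D \rightarrow X\E$ transition is literally the reflection, under $T \mapsto T^{t}$, of the $\E X' \rightarrow \D X'$ procedure of Lemma \ref{EX_lemma}: ``remove the bottom-most empty row and insert a column carrying an $\alpha$, of maximal length, at the right-most position'' is the conjugate of ``remove the right-most empty column and insert a row carrying a $\beta$, of maximal length, at the lowest position,'' including the degenerate length-zero branch. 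Granting this, the lemma is immediate: a tableau $T$ of type $X\D$ transposes to a valid tableau $T^{t}$ of type $\E X'$; Lemma \ref{EX_lemma} produces a valid tableau of type $\D X'$; and transposing back---a validity-preserving involution---returns a valid multi-Catalan tableau of type $\phi(\D X')=X\E$, as desired.

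The main obstacle is not conceptual but the bookkeeping in this last verification: one must confirm that ``maximal length'' and ``lowest/right-most position,'' together with the replacement of an E-labeled boundary edge by a D-labeled one (versus a D-labeled edge by an E-labeled one) and the gain of a $\beta$ (versus an $\alpha$), all correspond correctly under reflection, so that the square relating the two transitions genuinely commutes on the nose. Once this is pinned down, no separate analysis of the obstruction is needed here, because the offending configuration in Lemma \ref{EX_lemma}---a would-be AE box in the inserted row, which cannot hold a $\beta$---is carried by the transpose precisely to the would-be DA box (which cannot hold an $\alpha$) that one would otherwise have to rule out for the inserted column, and the argument excluding it transports verbatim.
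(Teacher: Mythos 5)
Your proposal is correct and is essentially the paper's own proof: the paper disposes of this lemma in one line by invoking the transpose of the tableau together with the exchange of $\alpha$ and $\beta$, reducing to Lemma \ref{EX_lemma}, which is precisely the transpose--swap involution you construct. Your write-up simply makes explicit the verifications (validity preservation, the action on types, and the correspondence of the insertion procedures) that the paper leaves implicit.
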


\begin{proof}
The proof is equivalent to the proof of Lemma \ref{EX_lemma} above, except that instead we take the transpose of the tableaux and exchange the roles of $\alpha$ and $\beta$.
\end{proof}

We carefully summarize the transitions from a multi-Catalan tableau $T$ to the tableau $S$, depending on the chosen corner at which the transition occurs. We will be referring to these cases further on. First we make the following definitions. Let $T$ have size $(n,k,r)$ and let $\lambda=(\lambda_1,\ldots,\lambda_{k+r})$ be the shape of $T$. Assume that $\lambda$ has at least one non-zero part.
\begin{defn} 
We define $\lambda_R$ be the indicator that equals 1 if $T$ has a right leg, and 0 otherwise. We define $\lambda_L$ be the indicator that equals 1 if $T$ has a left leg, and 0 otherwise.
\end{defn}
\begin{defn}
We call a \emph{top-most corner} a corner such that the length of the row containing it equals $\lambda_1$. We define the indicator $\delta_{\beta}$ which equals 1 if the top-most corner contains a $\beta$, and 0 otherwise. Analogously, we call a \emph{bottom-most corner} a corner such that the length of the row containing it equals the length of the smallest non-zero row of $\lambda$. We define the indicator $\delta_{\alpha}$ which equals 1 if the bottom-most corner contains an $\alpha$, and 0 otherwise. We call a \emph{middle corner} a corner that is neither a top-most corner or a bottom-most corner.
\end{defn}
\begin{rem}\label{cases} Denote by $\pi(T \rightarrow S)$ the rate of transition from tableau $T$ to $S$ (where by rate we mean the unnormalized probability). We obtain the following cases for the transitions from $T$ to $S$.
\begin{enumerate}
\item\label{mid} For a transition at a middle corner, a top-most corner with $\delta_{\beta}=1$, or a bottom-most corner with $\delta_{\alpha}=1$, we have $\wt(S) = \wt(T)$, and $\pi(T \rightarrow S)=1$.

\item\label{top0} For a transition at a top-most corner with $\delta_{\beta}=0$ such that the length of the column containing it is greater than 1, we have $\wt(S)=\wt(T)$ and $\pi(T \rightarrow S)=1$. Then $S$ will have top-most corner that contains an $\alpha$.
\item\label{bottom0} For a transition at a bottom-most corner with $\delta_{\alpha}=0$ such that the length of the row containing it is greater than 1, we have $\wt(S)=\wt(T)$ and $\pi(T \rightarrow S)=1$. Then $S$ will have a bottom-most corner that contains a $\beta$.

\item\label{top1} For a transition at a top-most corner with $\delta_{\beta}=0$ such that the length of the column containing it is 1, we have $\wt(S)=\frac{1}{\alpha}\wt(T)$ and $\pi(T \rightarrow S)=1$.
\item\label{bottom1} For a transition at a bottom-most corner with $\delta_{\alpha}=0$ such that the length of the row containing it is 1, we have $\wt(S)=\frac{1}{\beta}\wt(T)$ and $\pi(T \rightarrow S)=1$.

\item\label{right} For a transition at a right leg, we have $\wt(S) = \alpha\wt(T)$ and $\pi(T \rightarrow S)=\alpha$. $S$ will not have a right leg, and it will have a top-most corner that contains a $\beta$.
\item\label{left} For a transition at a left leg, we have $\wt(S) = \beta\wt(T)$ and $\pi(T \rightarrow S)=\beta$. $S$ will not have a left leg, and it will have a bottom-most corner that contains an $\alpha$.
\end{enumerate}
\end{rem}
 

\begin{figure}[!ht]
\centering
\includegraphics[width=0.8\textwidth]{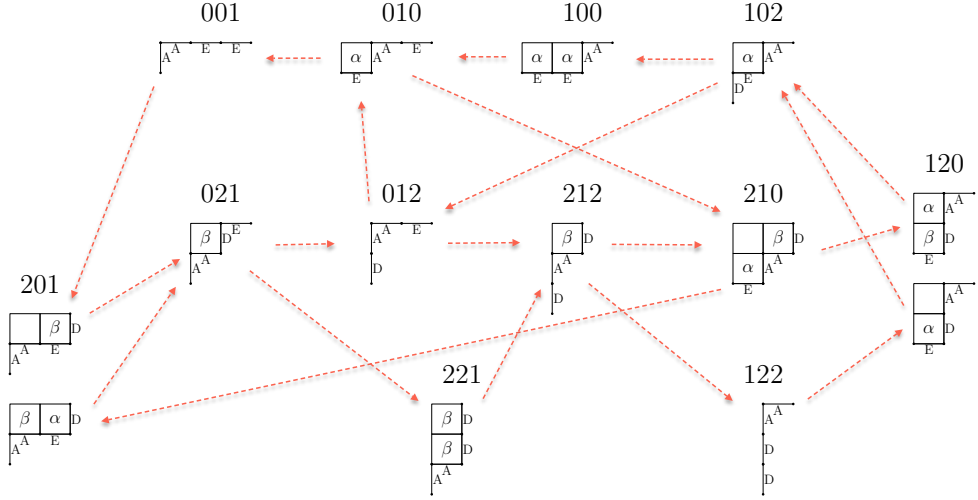}
\caption{The state diagram of a two-species PASEP at $q=0$ of size 3 and with one particle of type 1. Here the words in \{0,1,2$\}^3$ represent the states, with 0 representing a hole, 1 representing particle 1, and 2 representing particle 2.}
\noindent
\label{mc}
\end{figure}




Figure \ref{mc} shows an example of the transitions on all the states of size 3 with one particle of type 1.

\begin{thm}
The Markov chain on multi-Catalan tableaux projects to the two-species PASEP at $q=0$.
\end{thm}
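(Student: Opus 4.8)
The plan is to verify the two conditions in the definition of projection directly, using the explicit case analysis compiled in Remark \ref{cases} together with the weight-accounting established in Lemmas \ref{beta_lemma}--\ref{EX_lemma} and their transposes. Take $F$ to be the type map $\type$ sending a multi-Catalan tableau $T$ to the PASEP word $\type(T)=X$; this is surjective by the construction that fills a staircase diagonal from any word $X$. Recall that the PASEP transition rates out of a state $X$ are all equal to $1/(n+1)$ for the bulk swaps $\D\E\to\E\D$, $\D\A\to\A\D$, $\A\E\to\E\A$, rate $\alpha/(n+1)$ for $\E X'\to\D X'$, and rate $\beta/(n+1)$ for $X'\D\to X'\E$. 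The Markov chain $M$ on tableaux assigns to each transition point of $T$ a move, and I would normalize so that the move at a chosen transition point fires with probability $\pi(T\to S)/(n+1)$ using the rates $\pi(T\to S)\in\{1,\alpha,\beta\}$ recorded in Remark \ref{cases}; the denominators then match automatically.

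First I would check the first projection property: whenever $\Prob_M(T\to S)>0$, the move came from some chosen transition point of $T$, and by the explicit descriptions each such point corresponds to exactly one of the allowed PASEP transitions on $X=\type(T)$. A corner in a DE, DA, or AE box produces respectively $X'\D\E X''\to X'\E\D X''$, $X'\D\A X''\to X'\A\D X''$, $X'\A\E X''\to X'\E\A X''$, the right leg produces $\E X'\to\D X'$, and the left leg produces $X'\D\to X'\E$. In every case $\type(S)$ is precisely the PASEP word reached by that transition, so $F(T)=X\to F(S)$ is a legal PASEP move, and the firing rate $\pi(T\to S)/(n+1)$ equals the PASEP rate $\Prob_N(F(T)\to F(S))$. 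One must also note that AA boxes and the other forced-empty rows/columns are deliberately excluded from the transition points, so no spurious tableau moves appear without a PASEP counterpart.

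Next I would verify the harder second property: given a legal PASEP move $X=y_1\to y_2$ and any tableau $T$ with $\type(T)=y_1$, there is a \emph{unique} tableau $S$ with $\type(S)=y_2$ and $\Prob_M(T\to S)>0$, and the rates agree. Existence and validity of $S$ are exactly the content of Lemmas \ref{beta_lemma}--\ref{EX_lemma} and their transposes: each of the row/column removal-and-reinsertion procedures, and each boundary insertion, was shown to yield a valid multi-Catalan tableau of the correct type. For uniqueness I would argue that a given PASEP move $y_1\to y_2$ at location $i$ fixes the transition point of $T$ uniquely -- the $i$th edge of $L(T)$ determines the relevant corner or leg -- and then the reinsertion rule (lowest or right-most admissible position keeping a Young shape, with the deterministic relabeling) determines $S$ without ambiguity, since the extremal-position requirement leaves no choice. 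The main obstacle I anticipate is precisely this uniqueness-and-well-definedness step: I must confirm that the reinsertion is genuinely forced and that the induced filling of $S$ is the \emph{only} valid one compatible with $\type(S)=y_2$ and reachable from $T$. The subtle sub-cases are those governed by the indicators $\delta_\alpha,\delta_\beta,\lambda_L,\lambda_R$ of Remark \ref{cases}, where a unit-length row or column is cut (so a $\beta$ or $\alpha$ is lost and $\wt$ rescales by $1/\beta$ or $1/\alpha$) or where a leg collapses; here I would check case-by-case that the weight bookkeeping still gives $\pi(T\to S)$ matching the PASEP rate and that the image $S$ is as described (e.g. that a right-leg move yields an $S$ with a top-most $\beta$-corner and no right leg, so the map remains injective on the relevant fibers). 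Once all seven cases of Remark \ref{cases} are matched against the five PASEP transition types, both projection properties hold and the theorem follows.
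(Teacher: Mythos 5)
Your two verifications are exactly conditions (ii) and (iii) of the paper's proof, and you carry them out the same way the paper does: corners of type DE, DA, AE and the two legs are matched bijectively with the bulk and boundary PASEP moves, rates $1,\alpha,\beta$ over $n+1$ agree on both sides, existence and validity of the image tableau come from the row/column removal-and-reinsertion lemmas, and uniqueness comes from the determinism of the extremal reinsertion position together with the fact that distinct transition points produce distinct target words. For the literal definition of projection given in the paper, that is sufficient, so your argument is not wrong. However, the paper's proof of this theorem contains a third ingredient that your proposal never engages with: condition (i), the balance identity
\begin{equation*}
\wt(T)\sum_{S}\pi(T\to S)\;=\;\sum_{T'}\wt(T')\,\pi(T'\to T),
\end{equation*}
which is proved by enumerating the \emph{incoming} transitions to $T$ (the pre-images of the seven cases of Remark \ref{cases}) and showing both sides equal $\wt(T)\,(C+\alpha\delta_L+\beta\delta_R)$. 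This is where all the weight bookkeeping you mention ($\wt(S)=\wt(T)$, $\tfrac{1}{\alpha}\wt(T)$, $\tfrac{1}{\beta}\wt(T)$, $\alpha\wt(T)$, $\beta\wt(T)$) actually gets used; in your write-up you invoke those rescalings while checking rates, but the rates $\pi(T\to S)$ do not depend on the weights at all, so that part of your argument does no work. The point of condition (i) is to identify $\wt$ as the stationary measure of the tableau chain, which is what turns the projection statement into a second proof of Theorem \ref{main_thm}; if you intend your proof to serve that purpose (as the paper does), you must add the incoming-transition enumeration and verify the balance equation, including the degenerate cases where $C<2$ or $\lambda(T)$ has empty rows.
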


\begin{proof}
Let $T$ be a multi-Catalan tableau of size $(n,k,r)$. Let $\mathcal{S}= \{S\ :\ \pi(T \rightarrow S)>0\}$ and $\mathcal{T}= \{T'\ :\ \pi(T' \rightarrow T)>0\}$. We show the following:
\begin{enumerate}[i.]
\item Detailed balance holds:
\begin{equation}\label{balance}
\wt(T) \sum_{S \in \mathcal{S}} \pi(T \rightarrow S) = \sum_{T' \in \mathcal{T}} \wt(T')\pi(T' \rightarrow T).
\end{equation}
\item For every $S \in \mathcal{S}$, we have that $\frac{1}{n+1}\pi(T \rightarrow S)$ equals the probability of the transition from the state $\type(T)$ to $\type(S)$ of the two-species PASEP.
\item For every state $X$ of the two-species PASEP and every state $Y$ for which there is a nonzero probability $\frac{p}{n+1}$ of transition from $X$, for any tableau $T$ with $\type(T)=X$, there exists a unique tableau $S$ with $\type(S)=Y$, and moreover $\pi(T \rightarrow S)=p$.
\end{enumerate}
Condition (i) implies that $\wt(T)$ is proportional to the steady state probability of $T$. Satisfying condition (ii) for all $T$ and (iii) for all states $X$ of the two-species PASEP implies that $\weight(X)$ is proportional to the steady state probability of $X$. Thus proving (i)-(iii) is sufficient to show that our Markov chain on the multi-Catalan tableaux indeed projects to the two-species PASEP at $q=0$. 

\noindent \emph{Condition (i).} Let $X=\type(T)$. First we treat the transitions going out of $T$ to $S \in \mathcal{S}$. By the construction of our Markov chain on the tableaux, it is clear that there is a transition with probability 1 for every corner, a transition with probability $\alpha$ for a right leg, and a transition with probability $\beta$ for a left leg. These transitions directly correspond to all the possible transitions out of the two-species PASEP state $X$. Suppose $X$ has $C$ corners (note that AA boxes are excluded). Thus we obtain
\begin{equation}\label{LHS}
\sum_{S \in \mathcal{S}} \pi(T \rightarrow S) = C + \alpha \delta_L + \beta \delta_R.
\end{equation}

For the transitions going into $T$ from $T' \in \mathcal{T}$, we observe that any transition from one tableau to another ends with  a corner, an edge on the right leg, or an edge on the left leg. This is because for a transition that involves either inserting into the tableau a nonempty column containing an $\alpha$ or a nonempty row containing a $\beta$, then the box containing the Greek symbol is the aforementioned corner. Otherwise, for a transition that involves inserting into the tableau an empty column or an empty row, the result is a contribution of an edge to the right leg or an edge to the left leg, respectively. Thus it is sufficient to examine the corners and the right leg and left leg of $T$ to enumerate all the possibilities for $T' \in \mathcal{T}$.  We examine the pre-image of the cases for the possible transitions going out of $T$ to obtain the following cases for $T'$.

\begin{enumerate}
\item For a middle corner, a top-most corner with $\delta_{\beta}=0$, or a bottom-most corner with $\delta_{\alpha}=0$, we have $\wt(T')=\wt(T)$ and $\pi(T' \rightarrow T)=1$. This is the inverse of Case \ref{mid} of Remark \ref{cases}. This gives a contribution of $\wt(T)(C-2+(1-\delta_{\beta})+(1-\delta_{\alpha}))$ to the right hand side (RHS) of the detailed balance equation.\footnote{Note that if $C<2$, the formulas we give have some degeneracies. However, it is easy to verify that these do not cause any problems due to cancellation of all the degenerate terms.}
\item For a top-most corner with $\delta_{\beta}=1$ and $\delta_R=0$, we have a transition involving the right-leg of $T'$, so $\wt(T')=\frac{1}{\alpha}\wt(T)$ and $\pi(T' \rightarrow T)=\alpha$. This is the inverse of Case \ref{top0} of Remark \ref{cases}. This gives a contribution of $\alpha\frac{1}{\alpha}\wt(T)\delta_{\beta}(1-\delta_R)$ to the RHS of the detailed balance equation.
\item For a bottom-most corner with $\delta_{\alpha}=1$ and $\delta_L=0$, we have a transition involving the left-leg of $T'$, so $\wt(T')=\frac{1}{\beta}\wt(T)$ and $\pi(T' \rightarrow T)=\beta$. This is the inverse of Case \ref{bottom0} of Remark \ref{cases}. This gives a contribution of $\beta\frac{1}{\beta}\wt(T)\delta_{\alpha}(1-\delta_L)$ to the RHS of the detailed balance equation.

\item For a top-most corner with $\delta_{\beta}=1$ and $\delta_R=1$, there are two possibilities. For the first, $T'$ could fall into Case \ref{top0}, meaning that $T'$ has a top-most corner containing a $\beta$, which is the usual transition with $\wt(T')=\wt(T)$. For the second possibility, $T'$ could fall into Case \ref{top1}, meaning that $T'$ has a top-most corner containing an $\alpha$ and the column containing it has length 1. In that case, $\wt(T')=\alpha\wt(T)$. In both situations, $\pi(T' \rightarrow T)=1$. We obtain a contribution of $\wt(T)\delta_{\beta}\left(\delta_R +\alpha(1-\delta_R)\right)$  to the RHS of the detailed balance equation.

\item For a bottom-most corner with $\delta_{\alpha}=1$ and $\delta_L=1$, there are two possibilities. For the first, $T'$ could fall into Case \ref{bottom0}, meaning that $T'$ has a bottom-most corner containing an $\alpha$, which is the usual transition with $\wt(T')=\wt(T)$. For the second possibility, $T'$ could fall into Case \ref{bottom1}, meaning that $T'$ has a bottom-most corner containing a $\beta$ and the row containing it has length 1. In that case, $\wt(T')=\beta\wt(T)$. In both situations, $\pi(T' \rightarrow T)=1$.  We obtain a contribution of $\wt(T)\delta_{\alpha}\left(\delta_L +\beta(1-\delta_L)\right)$ to the RHS of the detailed balance equation.

\end{enumerate}

We sum up the contributions to the RHS of the detailed balance equation to obtain
\begin{multline}\label{RHS}
\sum_{T' \in \mathcal{T}} \wt(T')\pi(T' \rightarrow T) = \wt(T) ( C - \delta_{\beta} - \delta_{\alpha} + \delta_{\beta}(1-\delta_R) + \delta_{\alpha}(1-\delta_L) \\
+ \delta_{\beta}(\delta_R +\alpha(1-\delta_R)) +\delta_{\alpha}(\delta_L +\beta(1-\delta_L)) ).
\end{multline}
We see that after simplification, Equation \ref{RHS} equals Equation \ref{LHS}, so indeed the desired Equation \ref{balance} holds for ``most'' $T$. 

It remains to check a few degenerate cases for $T$, in particular, when $\lambda(T)=(0,\ldots)$. However, those cases can only occur when $\type(T)$ contains zero $\A$'s. Thus we refer to \cite{cw_mc} for these details.


\noindent \emph{Condition (ii).} In the definition of the Markov chain, the transitions on the corners of the tableau are set to have rate 1. These transitions, which occur on DE, DA, and AE corners, precisely correspond to the transitions $X\D\E  Y \rightarrow X\E\D Y$, $X\D\A  Y \rightarrow X\A\D Y$, and $X\A\E  Y \rightarrow X\E\A Y$ on the two-species PASEP that do not involve particles hopping on and off the boundary. On the two-species PASEP, such transitions have probability $\frac{1}{n+1}$, as desired.

Similarly, the transitions involving an empty column on the east end of the tableau have rate $\alpha$, and they precisely correspond to the transition $\E X \rightarrow \D X$ of the two-species PASEP, which has probability $\frac{\alpha}{n+1}$. Analogously, the transitions involving an empty row on the south end of the tableau have rate $\beta$, and they precisely correspond to the transition $X\D \rightarrow X\D$ of the two-species PASEP, which has probability $\frac{\beta}{n+1}$.

\noindent \emph{Condition (iii).} This condition holds by the definition of the Markov chain.

\end{proof}

\begin{rem}
It would be more enlightening to interpret the multi-Catalan tableaux as certain binary trees as a natural generalization of the tree-like tableaux that are in bijection with regular Catalan tableaux (see \cite{treelike} for reference). From this perspective, it is quite easy to visualize the Markov chain on the tableaux, and prove this is a projection onto the two-species PASEP. However, we leave this description for a future paper.
\end{rem}

\section{Two-species tableaux for $q=1$}\label{section_q1}

The goal of this section is to give a combinatorial formula for the two-species PASEP for $q=1$. To this end, we define \emph{two-species alternative tableaux}. In Figure \ref{q1_rules}, we illustrate the rules given in the following definition.

\begin{figure}[h]
\centering
\includegraphics[width=\textwidth]{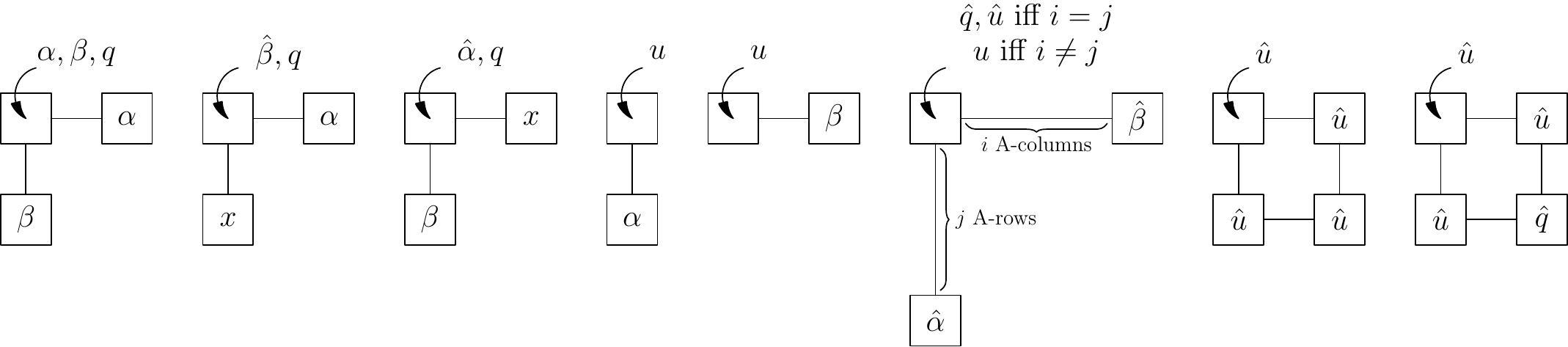}
\caption{An illustration of the rules in Definition \ref{q1_def} for the filling of a two-species alternative tableau.}
\noindent
\label{q1_rules}
\end{figure}

\begin{defn}\label{q1_def}
A \emph{two-species alternative tableau} of \emph{size} $n$ is a filling of a Young diagram of shape $(n,n-1,\ldots,1)$ with the symbols $\alpha,\hat{\alpha},\beta,\hat{\beta},q,\hat{q},u,\hat{u}$ according to the rules below.
\begin{itemize}
\item every box on the diagonal must contain an $\alpha, \beta$, or $x$.
\item a box that sees an $\alpha$ to its right and a $\beta$ below must contain an $\alpha$, $\beta$, or $q$.
\item a box that sees an $\alpha$ to its right and a $\beta$ below must contain an $\hat{\alpha}$ or $q$.
\item a box that sees an $\alpha$ to its right and a $\beta$ below must contain a $\hat{\beta}$ or $q$.
\item every box in the same column and above an $\alpha$ must contain a $u$, and every box in the same row and left of a $\beta$ must contain a $u$. 
\item for every pair of $\hat{\alpha}$'s and $\hat{\beta}$'s ($\hat{\alpha}$ left of $\hat{\beta}$) such that the number of A-rows and A-columns between them \textbf{is not} equal, put a $u$ at the intersection of the $\hat{\alpha}$ column and the $\hat{\beta}$ row. 
\item for every pair $\hat{\alpha}$'s and $\hat{\beta}$'s ($\hat{\alpha}$ left of $\hat{\beta}$) such that the number of A-rows and A-columns between them \textbf{is} equal, we must put either a $\hat{q}$ or a $\hat{u}$ at the intersection of the $\hat{\alpha}$ column and the $\hat{\beta}$ row.
\item the placement of the $\hat{q}$ and $\hat{u}$ above must satisfy that there is no instance of $\begin{smallmatrix}\hat{q}&\hat{u}\\\hat{u}&\hat{u}\end{smallmatrix}$ or $\begin{smallmatrix}\hat{q}&\hat{u}\\\hat{u}&\hat{q}\end{smallmatrix}$.
\item every other box must contain a $u$.
\end{itemize}
\end{defn}

In these fillings, the $u$'s are simply place-holders for the empty boxes, and the $\hat{u}$'s are place-holders that enforce valid placement of the $\hat{q}$'s. An easy way to construct these fillings is to place the Greek symbols and $q$'s starting from the boxes closest to the diagonal and moving inwards. Once these symbols are placed everywhere possible, we define an $\hat{\alpha}$-column to be the boxes directly above an $\hat{\alpha}$, and a $\hat{\beta}$-row to be the boxes directly to the left of a $\hat{\beta}$. We then identify the boxes that lie at the intersections of the $\hat{\alpha}$-columns and the $\hat{\beta}$-rows, and fill them appropriately with $\hat{q}$'s, $\hat{u}$'s, or $u$'s. The rest of the tableau is automatically filled with $u$'s.

\begin{figure}[h]
\centering
\includegraphics[width=0.35\textwidth]{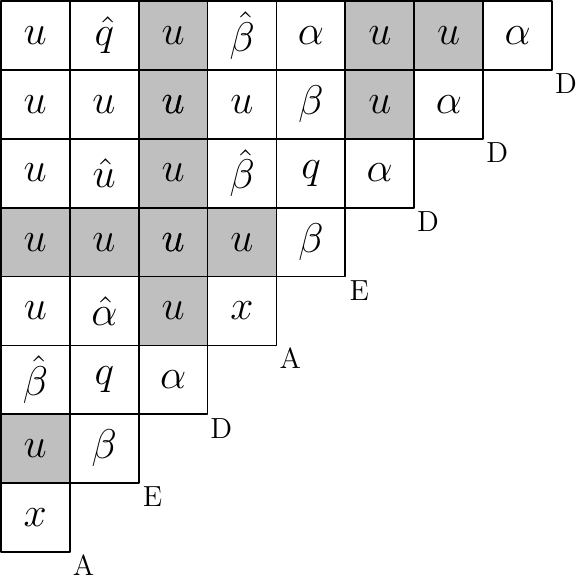}
\caption{A two-species alternative tableau of type $\D\D\D\E\A\D\E\A$ and weight $\alpha^6\beta^6$. The shaded boxes are ones that are automatically empty due to the labels on the diagonal.}
\noindent
\label{twospecies_example}
\end{figure}

\begin{defn}
The \emph{type} of the two-species alternative tableau is read off of the diagonal from top to bottom, by reading an $\alpha$ as $\D$, a $\beta$ as $\E$, and an $x$ as $\A$. The \emph{weight} of the tableau is the product of the symbols in the filling in the form of a monomial in $\alpha$ and $\beta$, where we set $u=\hat{u}=1$, $\hat{\alpha}=\alpha$, $\hat{\beta}=\beta$, and $\hat{q}=q=1$.
\end{defn}

The following conjecture is analogous to the main result of Section \ref{section_q0}, Theorem \ref{main_thm}. 

\begin{conj}
Consider the two-species PASEP at $q=1$, and let $X$ be a state represented by a word in $\{\D, \E, \A\}^n$ with precisely $r$ $\A$'s. Then the steady state probability of $X$ is
\begin{displaymath}
\Prob(X) = \frac{1}{Z^1_{n,r}}\sum_T \wt(T), 
\end{displaymath}
where the sum is over all two-species alternative tableaux $T$ such that $\type(T)=X$, and where  $Z^1_{n,r} = \sum_{T}\wt(T)$, for $T$ ranging over all two-species alternative tableaux of size $n$ whose type has exactly $r$ $\A$'s.
\end{conj}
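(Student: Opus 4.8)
The plan is to mirror the proof of Theorem \ref{main_thm}, now invoking the flexible Matrix Ansatz of Theorem \ref{ansatz2} at $q=1$ with the same constant $\lambda=\alpha\beta$ (the base case $n=1$, where the unique tableaux of types $\D,\E,\A$ have weights $\alpha,\beta,1$, calibrates this choice exactly as before). Writing $g(W)=\sum_{\type(T)=W}\wt(T)$ for the generating function over two-species alternative tableaux, Theorem \ref{ansatz2} reduces the conjecture to showing that $g$ satisfies the recurrences forced by conditions (I)--(V). At $q=1$ and $\lambda=\alpha\beta$ these read
\begin{align*}
g(X\D\E Y) &= g(X\E\D Y) + \alpha\beta\, g(X\D Y) + \alpha\beta\, g(X\E Y),\\
g(X\D\A Y) &= g(X\A\D Y) + \alpha\beta\, g(X\A Y),\\
g(X\A\E Y) &= g(X\E\A Y) + \alpha\beta\, g(X\A Y),\\
g(X\D) &= \alpha\, g(X), \qquad g(\E Y) = \beta\, g(Y).
\end{align*}
The decisive new feature relative to $q=0$ is that the commutator relations no longer annihilate a corner outright: each produces a \emph{swapped} word of the same length $n$ (the terms $g(X\E\D Y)$, $g(X\A\D Y)$, $g(X\A\E Y)$), which must be matched by a genuine weight-preserving bijection rather than by a row or column deletion.

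The induction on $n$ then proceeds case by case, along cases (i)--(v) of the proof of Theorem \ref{main_thm}. For the $\D\E$ recurrence I would fix a $\D\E$ corner of a tableau $T$ of type $X\D\E Y$ and split the fillings according to the content of that corner. Fillings carrying an $\alpha$ (resp.\ $\beta$) force its column (resp.\ row) to be place-holders, so removing that column (resp.\ row) yields the two bijections contributing the $\alpha\beta\,g(X\D Y)$ and $\alpha\beta\,g(X\E Y)$ terms, with exactly the weight bookkeeping of case (i) of Theorem \ref{main_thm}. The remaining fillings are those carrying a $q$ in the chosen corner, and these must be placed in weight-preserving bijection with tableaux of type $X\E\D Y$: the $q$ records the transposition of the $\D$ and $\E$ on the boundary, the lattice path is re-routed, and the internal $\hat q/\hat u$ place-holders are re-assigned so that the constraints of Definition \ref{q1_def} continue to hold. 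The $\D\A$ and $\A\E$ recurrences are handled similarly, the corner content being forced either to a single Greek symbol (giving the $\alpha\beta\,g(X\A Y)$ term) or to a $q$ (giving the swap term), the difference being that here the swap moves an $\A$-row or $\A$-column past a $\D$-column or $\E$-row.

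The two boundary recurrences $g(X\D)=\alpha\,g(X)$ and $g(\E Y)=\beta\,g(Y)$ are insensitive to $q$ and are established exactly as cases (iv)--(v) of Theorem \ref{main_thm}: a leading $\E$ corresponds to an empty column on the east whose deletion is a bijection of weight $\beta$, and a trailing $\D$ to an empty row on the south of weight $\alpha$. Together with the base case this would close the induction. An alternative route, generalizing Section \ref{section_mc}, would be to build a Markov chain on two-species alternative tableaux projecting to the $q=1$ PASEP; but since $q=1$ activates the backward transitions at rate $1$, that chain and its detailed-balance bookkeeping are substantially heavier, so the recurrence approach seems cleaner.

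The crux, and the reason the statement remains conjectural, is the construction of the swap bijections for the $q$-filled corners. In the one-species setting of Corteel--Williams \cite{cw2011} the analogous exchange is essentially local, but here the rule governing the $\hat q/\hat u$ fillings is \emph{non-local}: whether a paired $\hat\alpha$ and $\hat\beta$ may carry a $\hat q$ or $\hat u$ (rather than a forced $u$) depends on the parity of the number of A-rows and A-columns lying strictly between them, and the admissible $\hat q/\hat u$ patterns are further coupled through the forbidden $2\times 2$ configurations of the last two bullets of Definition \ref{q1_def}. A $\D\A\to\A\D$ or $\A\E\to\E\A$ swap moves an $\A$-row or $\A$-column across a $\D$-column or $\E$-row and can therefore change these between-counts, toggling the eligibility of many pairs at once; even a $\D\E\to\E\D$ swap perturbs the path and hence the relative positions of the $\hat\alpha$'s and $\hat\beta$'s. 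Exhibiting a bijection that re-matches the $q$-corner fillings to the swapped type while \emph{simultaneously} respecting the parity condition and the forbidden-pattern condition is the central difficulty, and controlling this global interaction is precisely what the present line of attack does not yet resolve.
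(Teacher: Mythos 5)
The statement you are addressing is left as a conjecture in the paper: the author offers no proof, only computational verification in SAGE up to $n=10$ and the remark that ``so far the proof appears tedious.'' There is therefore no paper proof to compare against. Your proposal is an accurate and well-organized account of the natural line of attack --- the recurrences you derive from Theorem \ref{ansatz2} at $q=1$ with $\lambda=\alpha\beta$ are the correct ones, and your diagnosis of where the argument breaks (the swap terms $g(X\E\D Y)$, $g(X\A\D Y)$, $g(X\E\A Y)$ of unreduced length, which demand genuine weight-preserving bijections rather than row/column deletions) is exactly the obstruction. But a diagnosis is not a proof: the central object, a bijection from $q$-corner fillings of type $X\D\E Y$ (resp. $X\D\A Y$, $X\A\E Y$) to all fillings of the swapped type that respects both the parity condition on A-rows/A-columns between paired $\hat\alpha$'s and $\hat\beta$'s and the forbidden $2\times 2$ patterns of $\hat q$'s and $\hat u$'s, is never constructed, and you say so yourself. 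As it stands the proposal establishes nothing beyond the boundary recurrences and the $\alpha$/$\beta$-corner cases, which carry over verbatim from the $q=0$ argument.

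One further gap you should flag even in the outline: at $q=0$ every recurrence strictly shortens the word, so the recurrences visibly determine $f$ and a plain induction on $n$ closes. At $q=1$ the swap terms preserve length, so even granting all the bijections you would still need to argue that the system of recurrences uniquely determines the weights --- typically by a secondary induction on an inversion-type statistic measuring how far a word of length $n$ is from a canonical one, so that repeated swaps terminate. Your phrase ``the induction on $n$ then proceeds case by case'' silently assumes this; it is true but requires an argument, and it is part of what a complete proof must supply.
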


We have verified the above using SAGE for up to $n=10$. However, so far the proof appears tedious.

\end{document}